\Crefname{equation}{}{}
\newtheorem{theorem}{Theorem}[section]
\newtheorem{lemma}[theorem]{Lemma}
\newtheorem{proposition}[theorem]{Proposition}
\newtheorem{corollary}[theorem]{Corollary}
\newtheorem{question}{Question}
\theoremstyle{definition}
\newtheorem{remark}[theorem]{Remark}
\newtheorem{definition}[theorem]{Definition}
\newtheorem{answer}{Answer}
\newtheorem{example}{Example}
\newcommand{\changed}[1]{#1}
\DeclareMathAlphabet{\mathpzc}{OT1}{pzc}{m}{it}
\newcommand{\rank}{\mathrm{rk}\,}
\newcommand{\Var}[1]{\mathcal{#1}}
\newcommand{\Tang}[2]{T_{#1} {#2}}
\newcommand{\R}{\mathbb{R}}
\newcommand{\deriv}[2]{\mathrm{d}_{#2}#1}
\newcommand{\dist}{\mathrm{dist}}
\newcommand{\bbk}{\Bbbk}
\newcommand{\bbC}{\mathbb{C}}
\newcommand{\bbN}{\mathbb{N}}
\newcommand{\calM}{\mathcal{M}}
\newcommand{\calP}{\mathcal{P}}
\newcommand{\calX}{\mathcal{X}}
\newcommand{\calY}{\mathcal{Y}}
\newcommand{\bbR}{\mathbb{R}}
\newcommand{\vvirg}{, \dots , }
\renewcommand{\bar}[1]{\overline{#1}}
\newcommand{\im}{\mathrm{im}}
\newcommand{\textprod}{{\textstyle \prod}}
\newcommand{\bfx}{\mathbf{x}}
\crefname{equation}{}{}
\crefname{equation}{}{}
\crefname{figure}{Figure}{Figures}
\crefname{section}{Section}{Sections}
\crefname{lemma}{Lemma}{Lemmata}
\crefname{prop}{Proposition}{Propositions}
\crefname{thm}{Theorem}{Theorems}
\crefname{cor}{Corollary}{Corollaries}
\crefname{question}{Question}{Questions}
\crefname{dfn}{Definition}{Definitions}
\crefname{notation}{Notations}{Notations}
\crefname{rem}{Remark}{Remarks}
\crefname{claim}{Claim}{claims}
\crefname{assumption}{Assumption}{Assumptions}
\crefname{proposition}{Proposition}{Proposition}
\crefname{corollary}{Corollary}{Corollaries}
\def\CC{\mathbb{C}}
\def\RR{\mathbb{R}}
\begin{document}

\author{Paul Breiding\fnref{fund1}}
\fntext[fund1]{Supported by the Deutsche Forschungsgemeinschaft (DFG) -- Projektnummer 445466444.}
\ead{pbreiding@uni-osnabrueck.de}
\address{University Osnabr\"uck,
Fachbereich Mathematik/Informatik
Albrechtstr.\ 28a,
49076 Osnabrück, Germany}
\author{Fulvio Gesmundo}
\ead{gesmundo@cs.uni-saarland.de}
\address{Saarland University, Saarland Informatics Campus, 66123 Saarbr\"ucken, Germany}
\author{Mateusz Micha{\l}ek\fnref{fund2}}
\fntext[fund2]{Supported by the Deutsche Forschungsgemeinschaft (DFG) -- Projektnummer 467575307.}
\ead{mateusz.michalek@uni-konstanz.de}
\address{University of Konstanz, Dept.~of Mathematics and Statistics, Universit\"atsstrasse 10, 78457 Konstanz, Germany}
\author{Nick Vannieuwenhoven\fnref{fund3}}
\ead{nick.vannieuwenhoven@kuleuven.be}
\address{KU Leuven, Department of Computer Science, Celestijnenlaan 200A, B-3001 Leuven, Belgium;\\ Leuven.AI, KU Leuven Institute for AI, B-3000 Leuven, Belgium}
\fntext[fund3]{Partially supported by a Postdoctoral Fellowship of the Research Foundation---Flanders (FWO) with project 12E8119N. Partially supported by Internal Funds KU Leuven BOF STG/19/002.}

\title{Algebraic compressed sensing}

\begin{abstract}
We introduce the broad subclass of algebraic compressed sensing problems, where structured signals are modeled either explicitly or implicitly via polynomials. This includes, for instance, low-rank matrix and tensor recovery. We employ powerful techniques from algebraic geometry to study well-posedness of sufficiently general compressed sensing problems, including existence, local recoverability, global uniqueness, and local smoothness. Our main results are summarized in thirteen questions and answers in algebraic compressed sensing. Most of our answers concerning the minimum number of required measurements for existence, recoverability, and uniqueness of algebraic compressed sensing problems are optimal and depend only on the dimension of the model.

\begin{keyword}
Algebraic compressed sensing, recoverability, identifiability
\end{keyword}
\end{abstract}

\maketitle

\section{Introduction} \label{sec_introduction}

Compressed sensing investigates inverse problems in which a structured signal in a Euclidean space is to be reconstructed from a smaller number of (typically linear) measurements. Such inverse problems are well posed in the sense of Hadamard if there exists a unique, continuous solution.

The general setting consists of a \emph{model} $\Var{X}\subset \mathbb R^n$ and a \emph{measurement map}
\begin{align}\tag{IP}\label{eqn_ip}
\mu: \Var{X}\to \mathbb R^s.
\end{align}
Given a \emph{measurement} $y \in \mu(\calX)$, one is interested in reconstructing a \textit{signal} $x \in \Var{X}$ such that $y = \mu(x)$. This setting is schematically depicted in Figure \ref{fig1}.

We mainly focus on linear measurements, but we also consider the case where~$\mu$ is nonlinear. Recall that $\mu$ is a \textit{linear measurement map} if
\[
\mu = \hat{\mu}|_\Var{X}, \text{ where } \hat{\mu}:\bbR^n\to\bbR^s \text{ is a linear map}.
\]
We say that~$s$ is the \emph{number of measurements}. The \textit{forward map}~$\mu$ that takes a signal $x\in \Var{X}$ to the observed measurement $y=\mu(x)$ is usually easy to compute (and sometimes is even implemented in hardware \cite{DE2011}). On the other hand, the inverse problem that consists of computing $x\in \mu^{-1}(y)$ on input $y\in\mu(\Var{X})$ is generally a more difficult problem, and in fact it is not always possible to solve.

According to Kirsch \cite{Kirsch2011}, an inverse problem is \textit{well posed} in the sense of Hadamard at $y \in \bbR^s$ if the following conditions simultaneously hold:
\begin{enumerate}
 \item[(E)] \textit{Existence:} there exists a solution, i.e., there exists $x \in \calX$, with $y = \mu(x)$;
 \item[(I)] \textit{Identifiability:} the solution is locally unique at $y$, that is, there is an open neighborhood $U$ of $y$ in $\mu(\calX)$ such that, for every $y' \in U$, there is a unique $x' \in \calX$ with $y' = \mu(x')$;
 \item[(C)] \textit{Continuity:} the solution $x' = \mu^{-1}(y')$ is continuous as a function of $y' \in U$.
\end{enumerate}
It is argued in \cite[Chapter 1]{Kirsch2011} that well-posedness is a practical necessity for numerically solving the inverse problem associated with \cref{eqn_ip}.

\begin{figure}[t]
\begin{center}
\begin{tikzpicture}[remember picture,scale=1]
\node[inner sep=0] at (0,0)
{\includegraphics[width=.75\textwidth]{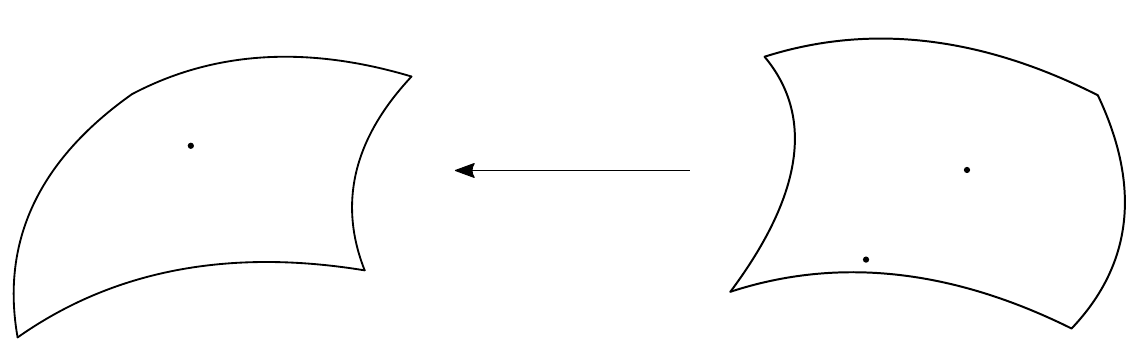}};
\node at (4.0,0.2) {$x_1$};
\node at (2.9,-0.8) {$x_2$};
\node at (-3.8,0.2) {$y$};
\node at (0,0.3) {$\mu$};
\node at (2.9,-2) {model $\Var{X}\subset \mathbb R^n$};
\node at (-3.4,-2) {measurements $\mu(\Var{X})\subset \mathbb R^s$};
\end{tikzpicture}
\caption{Schematic description of the geometry. On the right is the model $\Var{X}$. On the left is the set of measurements $\mu(\Var{X})$ taken from~$\Var{X}$ using the measurement map $\mu$. In the picture we have $\mu(x_1) = y$. We are concerned with the question whether $x_1$ is the only point in $\Var{X}$ mapping to $y$, or whether there exists another point $x_2\in \Var{X}$ with $\mu(x_2)=y$.}\label{fig1}
\end{center}
\end{figure}

Special cases of the inverse problem \cref{eqn_ip} were extensively studied in compressed sensing \cite{Donoho2006,CT2005,CT2006,CRT2006b,CRT2006}. In the original formulation, $\Var{X} \subset \R^n$ is a (special) union of linear subspaces and $\mu$ is the restriction to $\calX$ of a linear map $\hat{\mu}:\mathbb R^n \to \mathbb R^s, x\mapsto Ax$ \cite{DE2011,BW2009}. Later, more general models for $\Var{X}$ were investigated, such as smooth manifolds \cite{BW2009,BCW2010}. See \Cref{ex_lowrankmatrices,ex_tensorrank,ex_moment_varieties} below for further examples of compressed sensing.

Many techniques were developed in compressed sensing to prove well-posedness of a problem, such as mutual coherence, the restricted isometry property (RIP), the nullspace property, or bi-Lipschitz continuity \cite{FR2013}. Arguably, the most popular of these is the RIP \cite{Donoho2006,CT2005,CT2006}. However, as we discuss in \cref{sec:review}, it seems that RIP-based analyses require a number of measurements $s \ge O( d \log n)$ to guarantee well-posedness, where~$d$ is the intrinsic dimension of the model $\Var{X}$ and $n$ is the dimension of the ambient space. One main contribution of this paper is reducing the number of required samples for (\textit{almost everywhere}) well-posedness to the minimal value for an important class of models $\Var{X}$ and measurement maps $\mu$ that includes the original union of subspaces and certain smooth manifolds.

This paper studies the well-posedness of \textit{algebraic compressed sensing}; that is, inverse problems where the model $\calX$ is explicitly or implicitly defined by polynomials and $\mu$ is a polynomial map. Our main tools rely on fundamental techniques from classic algebraic geometry.

We treat the following two complementary settings simultaneously in this work:
\begin{itemize}
 \item[(IS)] \label{def-X1} \textit{Implicit setting}: $\calX$ is defined as the \textit{real vanishing locus} of a real polynomial system of equations. More precisely,
 \begin{equation*}
\Var{X} = \{ x\in\R^n : f_1(x)=\dots=f_k(x)=0 \},
\end{equation*}
where $f_1,\dots,f_k\in \bbR[x_1,\dots,x_n]$ are polynomials with real coefficients in $n$ real variables.
\item[(ES)] \label{def-X2} \textit{Explicit setting}: $\calX$ is defined as the \emph{image of a polynomial map}, so that
\begin{equation*}
\Var{X} = \{ x=\phi(t)=(\phi_1(t), \dots, \phi_n(t)) : t \in \mathbb R^\ell \},
\end{equation*}
where $\phi_1,\dots,\phi_n \in\bbR[t_1,\dots,t_\ell]$ are polynomials with real coefficients in $\ell$ real variables.
\end{itemize}
Most of our results hold for complex models as well, so for completeness we also cover this case.
For convenience and clarity of expression, we assume that $\Var{X}$ is \emph{irreducible}: this technical condition is explained in \cref{sec:AG}. Intuitively, it means that $\Var{X}$ cannot be expressed as a finite union of such sets. As the \emph{affine space} $\mathbb R^\ell$ is irreducible and images of irreducible sets are irreducible, irreducibility always holds in the setting (ES). With some carefulness about the interpretation of genericity, most of our results can be extended to reducible varieties in setting (IS) as well.

In setting (IS) we say that $\Var{X}$ is \textit{implicitly defined} by polynomials, and in setting (ES) we say that $\Var{X}$ is \emph{explicitly parametrized} by polynomials.
In the language of algebraic geometry, a set implicitly defined by polynomials is called a \emph{real algebraic variety}, while sets explicitly parameterized by polynomials are \emph{semi-algebraic sets}. Though note that not all semi-algebraic sets arise in this way.

These two classes of polynomial models cover a wide range of specific cases, many of which were already studied individually in a compressed sensing context.
The following examples demonstrate that our assumptions on $\Var{X}$ are not very restrictive and the results of this work can be applied in a number of different settings. For an overview of polynomial models in the sciences see also \cite{breiding2021nonlinear}.

\begin{example}[Classic compressed sensing]
The original formulation of compressed sensing assumed that an observed signal $y \in \bbR^s$ is a $k$-sparse linear combination of a set of~$m$ vectors $v_1,\dots,v_m$ in $\bbR^n$. That is, there exist indices $i_j$, $j=1,\ldots,k$, so that
$
 y = \hat{\mu}( \lambda_1 v_{i_1} + \lambda_2 v_{i_2} + \dots + \lambda_k v_{i_k} ),
$
where $\hat{\mu}:\bbR^n\to\bbR^s$ is a linear map.
This is an algebraic compressed sensing problem where $\Var{X}\subset\bbR^n$ is the union of the $\binom{m}{k}$ linear subspaces of dimension $k$ spanned by the subsets of $k$ vectors from $v_1,\dots,v_m$. Such a union of subspaces can be formulated as set of solutions of a polynomial system in setting (IS). The sensing map is $\mu = \hat{\mu}|_\Var{X}$.

Note that $\Var{X}$ is not irreducible. Nevertheless, the well-posedness of such compressed sensing problems is well known \cite{FR2013} and it can be verified that for a union of equidimensional varieties, the main result, \cref{demo-thm}, applies verbatim.
\end{example}

\begin{example}[Matrix completion]\label{ex_lowrankmatrices}
The algebraic variety of bounded-rank matrices in~$\bbR^{m\times n}$ is a model for compressed sensing that was studied in \cite{FCRP2008}. When the measurement map is a coordinate projection, the associated inverse problem is also called a \textit{matrix completion problem}, which has many applications, including data analysis and recommender systems \cite{NKS2019}. Polynomials that implicitly define $m\times n$ matrices of rank at most $r$ are all the $(r+1)\times (r+1)$ minors. One can also view low-rank matrices in the explicit setting, via the polynomial parameterization given by $\phi:\bbR^{m\times r} \times \bbR^{n\times r} \to \bbR^{m\times n}, (A,B)\mapsto AB^T$. Related low-rank matrix models that our analysis covers include low-rank matrices with sparseness constraints \cite{GV2012} and low-rank plus sparse matrix recovery \cite{CSPW2009,TV2020}. The case of measurement maps that are coordinate projections is addressed in \cite{BR2000,SC2010,KTT2015,GHIL:complexity_lin_circuits_and_geometry,Tsakiris2021,Tsakiris2021b} \changed{using a variety of techniques from rigidity theory, algebraic combinatorics, and matroid theory.} 
\end{example}

\begin{example}[Tensor completion]\label{ex_tensorrank}
Other examples are various bounded-rank tensor decompositions \cite{KB2009,GKT2013}, such as the tensor rank decomposition or canonical polyadic decomposition \cite{Hitchcock1}, Tucker decomposition \cite{Tucker1966}, tensor trains \cite{Vidal2003,Oseledets2011}, hierarchical Tucker \cite{HK2009,Grasedyck2010}, and even general tensor network decompositions \cite{YL2018}. All of these examples can be parameterized straightforwardly by polynomials. These decompositions feature in data analysis applications as an a priori model for filling in missing data \cite{KB2009,PFS2017}.
The compressed sensing literature already investigated well-posedness conditions for the tensor rank decomposition with sparsity constraints \cite{SK2012}, unconstrained tensor rank decomposition \cite{RS2020}, and Tucker decomposition, tensor trains, and hierarchical Tucker decompositions were studied jointly in \cite{RSS2017}.
\end{example}

\begin{example}[Moment varieties]\label{ex_moment_varieties}
Our final example is from (algebraic) statistics \cite{algstat2018}. Multivariate probability distributions are characterized by an infinite sequence of symmetric tensors of increasing degree, namely their \textit{sequence of moments}, or, alternatively, cumulants \cite{Kolassa2006}. Many probability distributions belong to a parameterized family; for example, $k$-variate Gaussian distributions are parameterized by the mean vector $m\in\bbR^k$ and the covariance matrix $\Sigma \in \bbR^{k\times k}$. Pearson \cite{Pearson1894} observed that for some families a finite sequence of moments suffices to identify the parameters of the distribution from empirical moments. In recent years, it was realized that the set of possible $d$th order moments of a parameterized family is sometimes defined by polynomials \cite{AAR2021,AFS2016,KSS2020},
thus carving out a \textit{moment variety} $\Var{X}$ in the space of $d$th order symmetric tensors in $k$ variables $S^d \bbR^k \simeq \bbR^{\binom{k+d-1}{d}}$. Estimating \textit{all} $\binom{k+d-1}{d}$ entries of the $d$th order empirical moment in $S^d \bbR^k$ is computationally challenging. However, since the probability distribution is represented by a point on $\Var{X} \subset S^d \bbR^k$, we could instead compute a linear projection of the empirical moment. This leads to an algebraic compressed sensing problem. Only recently, the well-posedness of this problem was studied by Lindberg, Amendola, and Rodriguez \cite{LAR2021}.
For mixtures of Gaussians, Guo, Nie and Yang \cite{GNY2021} recently presented an efficient numerical algorithm to estimate the parameters from coordinate projections of the third-order moment.
\end{example}

Let
\[
d:=\dim \Var{X}
\]
denote the dimension of the model $\calX$. The notion of dimension is introduced formally in Section \ref{subsec: dimension and tangent}. This is the formal way to measure the ``intrinsic size'' of the model $\Var{X}$; it coincides with the dimension of the set of non-singular points of $\Var{X}$, which is a smooth manifold embedded in $\bbR^n$ \cite{BR1990}. In the explicit setting (ES), $\dim \calX$ is the maximal rank of the Jacobian of the parametrization $\phi$ at points in~$\mathbb R^\ell$ \cite{CLO2015}; in particular, in this setting, $\dim \calX$ is bounded from above by $\ell$, which is often called the number of \emph{degrees of freedom} in the parameterization. \changed{The latter can be strictly larger than the dimension; the explicit parameterization in \cref{ex_lowrankmatrices}, for example, has $(m+n)r$ degrees of freedom, while the dimension of the variety of bounded-rank matrices is only $(m+n-r)r$ \cite[Proposition 12.2]{Harris1992}.} As we will see, the dimension of the model~$\Var{X}$ determines the required number of measurements $s$ so that the inverse problem is well posed. Throughout this paper, the dimension $d$ is fixed and the number of measurements $s$ can vary, reflecting the design choice of the measurement map.

The heart of this paper is \cref{q_and_a}. We discuss and explore \textbf{thirteen questions and answers} about the well-posedness of algebraic compressed sensing problems. The questions are divided into four themes:
\begin{enumerate}
\item[(E)] \emph{Existence:} Given $y\in\R^s$ when are there signals $x\in \Var{X}$ with $y = \mu(x)$?
\item[(R)] \emph{Recoverability}: When are there finitely many points in $\mu^{-1}(y)$ for $y\in\mu(\Var{X})$?
\item[(I)] \emph{Identifiability}: When is $\mu^{-1}(\mu(x))$ equal to $\{x\}$?
\item[(C)] \emph{Continuity:} When is $\mu^{-1}$ locally (Lipschitz) continuous at $\mu(x)$?
\end{enumerate}
If (E), (I) and (C) hold at $y$, the inverse problem is well posed at $y$ by the definition we gave above. The main reason to consider recoverability as well is that it enables localized well-posed algebraic compressed sensing problems. Indeed, if only (E), (R) and (C) hold at $y$, then the inverse problem can be made well posed by further restricting the variety $\Var{X}$ to a neighborhood of one of the elements in the fiber of $\mu$.

The engine that powers our answers in the Q\&A section is a fundamental result from commutative algebra called the \emph{Noether Normalization Lemma}; see \cref{thm:Noether Norm} below. The proof of this result is sketched in \cref{sec:AG} and we refer to \cite{Eis:CommutativeAlgebra} for a comprehensive discussion on the proof and the lemma's deep implications.

Many of our answers use the notion of a ``generic point in $\Var{X}$''. This is a subtle concept, which we carefully explain in \cref{sec:AG}. For the moment, a reader with an affinity to probability may think of a generic point as a point  being chosen with probability 1 from a probability measure that is absolutely continuous with respect to the Lebesgue measure; a reader with a background in analysis can interpret the set of generic points as an open dense subset in $\Var{X}$. Nevertheless, being a generic point is a strictly stronger property.

As a teaser of the questions and answers, we summarize in the next theorem the answers to \cref{QExistence,question:finite map,Q3,Q11}. In \cref{q_and_a}, we list nine additional questions and answers, which shed light on different aspects of algebraic compressed sensing.
\begin{theorem}\label{demo-thm}
Let $\Var{X}$ be given as in (IS) or (ES), and let $\mu$ be a generic linear map $\mu: \Var{X}\rightarrow \mathbb R^s$. Then:
\begin{enumerate}
\item[(E)] the fiber $\mu^{-1}(y) \ne \emptyset$ at $y\in\R^s$ if and only if a particular system of polynomial equations has a solution:
\begin{itemize}
\item in the implicit setting (IS), this system of equations in the variables $x=(x_1,\ldots,x_n)$ is defined by
$f_1(x)=\cdots=f_k(x)=\mu(x)-y= 0$;
\item in the explicit setting (ES), this system in variables $t=(t_1,\ldots,t_\ell)$ is defined by
$\mu(\phi(t))-y=0;$
\end{itemize}
\item[(R)] if $s\ge d$, the fiber $\mu^{-1}(\mu(x))$ of a generic point $x\in \Var{X}$ is finite;
\item[(I)] if $s\geq d+1$, the fiber $\mu^{-1}(\mu(x))$ of a generic point $x\in \Var{X}$ is equal to $\{x\}$;
\item[(C)] if $s\geq d+1$, the fiber $\mu^{-1}(\mu(x))$ of a generic point $x\in \Var{X}$ is $C^\infty$ continuous.
\end{enumerate}
\end{theorem}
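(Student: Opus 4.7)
The theorem naturally splits into four claims; the plan is to treat them in order, with (E) as a warm-up, (R) and (I) as applications of the Noether Normalization Lemma (strengthened via the primitive element theorem for (I)), and (C) as a differential-geometric consequence of (I).

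First, (E) requires no substantive argument beyond unfolding the definition of $\mu^{-1}(y)$: in (IS), membership in $\calX$ is cut out by $f_1 = \cdots = f_k = 0$, so $y \in \mu(\calX)$ is equivalent to the combined system $f_1(x) = \cdots = f_k(x) = \mu(x) - y = 0$ having a solution; in (ES), it is exactly the solvability of $\mu(\phi(t)) - y = 0$. Second, for (R) I would invoke Noether Normalization directly. For irreducible $\calX$ of dimension $d$, a generic $d$-tuple of linear forms on $\bbR^n$ realizes $\calX$ as a finite cover of $\bbR^d$. When $s \geq d$, the first $d$ coordinates of a generic linear $\mu$ (after a generic linear change of coordinates on $\bbR^s$) furnish such a Noether normalization of $\calX$; since fiber-finiteness is preserved when further projecting or when taking the inverse image by a map that factors through a finite map, every fiber of $\mu$ is finite.

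The technical heart is (I), where finiteness must be upgraded to birationality. I would use that the field extension $\bbR(\calX)/\bbR(\ell_1, \ldots, \ell_d)$ induced by a Noether normalization is finite and separable (characteristic zero), so the primitive element theorem applies: a generic additional linear form $\ell_{d+1}$, viewed as an element of $\bbR(\calX)$, generates the extension, and hence $\bbR(\calX) = \bbR(\ell_1, \ldots, \ell_{d+1})$. This means the map $(\ell_1, \ldots, \ell_{d+1}) : \calX \to \bbR^{d+1}$ is birational onto its image; adding further coordinates for $s > d+1$ preserves birationality. Birationality of a dominant morphism of varieties of the same dimension translates into the existence of a Zariski-open dense $U \subset \calX$ on which $\mu^{-1}(\mu(x)) = \{x\}$, which is the desired statement for generic $x$. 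The subtlety here is interpreting ``generic'' consistently over $\bbR$; this is routine because the loci of acceptable $\mu$ (those realizing both a Noether normalization and a primitive element) are Zariski-open in the real parameter space of linear maps, and the argument descends from $\bbC$ by standard transfer.

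Finally, (C) is a differential-geometric corollary of (I). On the Zariski-open subset $U \cap \calX_{\mathrm{smooth}}$, the tangent space $T_x\calX$ has dimension $d$, and for generic $\mu$ the differential $d_x\mu$ is injective on $T_x\calX$ (which requires only $s \geq d$). Combined with the pointwise injectivity of $\mu$ near $x$ from (I), the inverse function theorem (in its constant rank formulation) shows that $\mu$ is locally a $C^\infty$ embedding of a neighborhood of $x$, so its inverse $\mu^{-1}$ is $C^\infty$ near $\mu(x)$. The main obstacle across the whole argument is (I): Noether normalization alone yields only finiteness, and the ``$+1$'' dimension of $s$ over $d$ is precisely what is consumed by the primitive element step to promote finiteness to birationality. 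Once (I) is in hand, (R), (E), and (C) fall out quickly.
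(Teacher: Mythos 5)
Your proof is correct and tracks the paper's argument closely: all four parts are established with the same key inputs (unfolding definitions for (E), Noether Normalization for (R), an upgrade to birationality for (I), and the inverse function theorem for (C)), and you use the same real-to-complex transfer to handle $\bbR$. The one place you diverge in presentation is (I): the paper invokes the ``birationality to a hypersurface'' theorem (\cref{thm:bir with hyper}, quoted from Shafarevich) as a black box, whereas you reprove it on the spot by combining Noether Normalization with the primitive element theorem, i.e., by showing that a generic linear form $\ell_{d+1}$ is a primitive element of the finite separable extension $\bbk(\calX)/\bbk(\ell_1,\dots,\ell_d)$. This is in fact the standard proof of the cited theorem, so it is the same mathematics with one lemma unpacked; it buys self-containedness at the cost of a few extra lines about why the set of primitive elements intersects the space of linear forms in a Zariski-open set.

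One small wording issue to repair in (R): you write that ``fiber-finiteness is preserved when further projecting,'' which reads backwards — projecting to fewer coordinates enlarges fibers, it does not shrink them. What you want, and what your surrounding sentences make clear you mean, is the containment $\mu^{-1}(y)\subseteq (\pi\circ\mu)^{-1}(\pi(y))$ where $\pi:\bbR^s\to\bbR^d$ is the coordinate projection onto the Noether-normalizing coordinates, so finiteness of the fibers of $\pi\circ\mu$ forces finiteness of the fibers of $\mu$. Stating the containment explicitly would remove the ambiguity.
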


The paper is structured as follows. In the next section, we offer a glimpse of what lies beyond the restricted isometry property for establishing (local) well-posedness of inverse problems like \cref{eqn_ip}. Thereafter, in \cref{q_and_a}, the main results of this paper are presented as a sequence of thirteen basic questions and answers related to the well-posedness of algebraic compressed sensing problems. \Cref{sec:AG} recalls the main techniques from algebraic geometry that are employed in \cref{q_and_a_proofs} to furnish proofs of the main results from \cref{q_and_a}. Numerical experiments in \cref{sec_numexp} illustrate the minimal requirements on the number of measurements for well-posedness in algebraic compressed sensing. \changed{The paper is concluded in \cref{sec_conclusions} with an outlook and open problems in algebraic compressed sensing.}

\section{What lies beyond the restricted isometry property?}\label{sec:review}

The $\epsilon$-\textit{restricted isometry property} ($\epsilon$-RIP) of the measurement map $\mu : \Var{X} \to \bbk^s$ \cite{CT2005,CT2006,Donoho2006}, where $\bbk=\bbR$ or $\bbk=\bbC$, is a core concept in compressed sensing. For general models $\Var{X} \subset \bbk^n$, it can be defined as follows \cite{BCW2010}: for all $x, y \in \Var{X}$ we have
\begin{align}\tag{RIP}\label{eqn_rip}
 0 < 1-\epsilon \le \frac{\| \mu(x) - \mu(y) \|_{\bbk^s}}{\| x - y \|_{\bbk^{n}}} \le 1+\epsilon < \infty,
\end{align}
where $\epsilon \in [0,1)$ is a constant depending on the measurement map $\mu$, and $\| \cdot \|_A$  denotes a norm on the vector space $A$.
The $\epsilon$-RIP implies that the measurement map~$\mu$ is everywhere identifiable.
Maps satisfying the RIP are a special type of \textit{global bi-Lipschitz embeddings}. Such embeddings were studied earlier for general metric spaces in \cite{Assouad1983,JL1984,LV1977,MovahediLankerani1990}.
Recall that a map $\mu : \Var{X} \to \Var{Y}$ between metric spaces $(\Var{X},\dist_\Var{X})$ and $(\Var{Y}, \dist_\Var{Y})$ is \textit{bi-Lipschitz} if there exist constants $c, C$ such that
\[
 0 < c \le \frac{\dist_\Var{Y}( \mu(x),  \mu(y) )}{\dist_\Var{X}( x, y ) } \le C < \infty
\]
for all $x \ne y$ \cite{BBI2001}.
Geometrically this means that $\mu$ is an injective map into $\bbk^s$ that does not stretch or bend $\Var{X}$ too much.
For this reason, bi-Lipschitz embeddings are sometimes called \textit{stable embeddings} \cite{BW2009,BCW2010}.
A bi-Lipschitz map has a bi-Lipschitz inverse map $\mu^{-1}$ with lower and upper Lipschitz constants equal to respectively~$C^{-1}$ and $c^{-1}$. Bi-Lipschitz embeddings are fundamental in compressed sensing because they offer two complementary \emph{global} properties:
\begin{enumerate}
 \item[(I)] the map is everywhere injective, and
 \item[(II)] the inverse map is Lipschitz continuous.
\end{enumerate}
The first property ensures that for all measurements $y \in \mu(\calX)$, the inverse problem has a unique solution. The second property guarantees that the inverse problem has a \textit{bounded sensitivity}, i.e., the inverse map has a bounded variation from one point to another, relative to the distance between the points.
Indeed, both the $\epsilon$-RIP and the $(1-\epsilon,C)$-bi-Lipschitz condition offer the \textit{uniform} upper bound
\begin{align} \tag{Lip}  \label{eqn_err}
 \dist_\Var{X}\left( \mu^{-1}(y_1), \mu^{-1}(y_2) \right) \le \frac{\dist_\Var{Y}( y_1, y_2 )}{1 - \epsilon}
\end{align}
for all measurements $y_1, y_2 \in \mu(\Var{X})$.
Taken together, (I) and (II) thus guarantee that the inverse problem \cref{eqn_ip} is well posed.

While properties (I) and (II) are certainly desirable, demanding that the measurement map is \textit{globally} bi-Lipschitz can be challenging to verify theoretically and imposes severe geometric constraints on $\Var{X}$ and $\mu$. Moreover, computing the optimal RIP constant for classic compressed sensing of $k$-sparse vectors is an NP-hard problem \cite{TP2014}. Our viewpoint is that these constraints can sometimes be relaxed. We believe that the more flexible, localized, easily implementable analysis as the one presented in this work can better cater to practical situations.

\subsection{How many measurements do we need?}
As the intrinsic dimension of the inputs is only $d = \dim \Var{X}$, we could a priori hope that $s = d$ measurements would suffice to recover $x$ from $\mu(x)$. However, $\epsilon$-RIP-based analyses of stable recovery of linear maps $\mu : \Var{X} \to \bbk^s$ in the literature ultimately lead to a required number of samples $s$ that scales at least like
\(
 d \cdot \log \left( \frac{n}{\epsilon^2} \right)
\)
when $\Var{X} \subset \bbk^n$ is a collection of $d$-dimensional linear spaces; see, e.g., \cite{BW2009,DE2011}. For compact $d$-dimensional manifolds a similar requirement is needed \cite[Theorem~3.1]{BW2009}.

The ratio between the intrinsic dimension $d$ and the required number of measurements $s$ for recoverability is called the \textit{oversampling rate}. So this rate seems to scale logarithmically in $n \epsilon^{-2}$ in the case where $\calX$ is a union of linear spaces. When considering more general models $\Var{X}$, insisting on a bi-Lipschitz embedding could lead to even higher oversampling rates. For general Lipschitz manifolds $\Var{X}$, the best upper bound we are aware of on the minimal number of measurements $s$ to recover any model $\Var{X}$ with (only) a \textit{local}\footnote{A map $f$ is a local embedding if every point in the domain has a neighborhood such that $f$ restricted to that neighborhood is an embedding.} bi-Lipschitz embedding $\mu$ is $s \ge d(d+1)$ \cite{LV1977}. Such a quadratic growth is often too high in applications.

As bi-Lipschitz embeddings preserve much more information about the model~$\Var{X}$ than mere injective maps, it is not surprising that RIP or bi-Lipschitz conditions lead to an oversampling factor greater than $1$.
But is this really \textit{necessary} for recoverability?
By relaxing the requirement of bi-Lipschitz embeddability, recoverability and identifiability can often be guaranteed with a smaller number of measurements.

\textit{This observation is neither new nor controversial.} In 1936, Whitney~\cite{Whitney1936} proved that a smooth $d$-dimensional manifold $\Var{X}$ can be smoothly embedded (in particular, the map is injective and its inverse is smooth) into a Euclidean space of dimension $s = 2 d$ for some smooth $\mu$. The oversampling rate is thus $2$. With such a measurement map, every point is identifiable and $\mu^{-1}$ is a local Lipschitz map, but $\mu^{-1}$ is not necessarily a (global) Lipschitz map.
Birbrair, Fernandes, and Jelonek \cite{BFJ2021} recently proved that if $\Var{X}$ is a closed semi-algebraic set then there exist bi-Lipschitz linear maps $\mu$ into $\R^{2 d + 1}$. Hence, even with bi-Lipschitz embeddings one can get to an oversampling factor as low as $2 + o(1)$ for some models~$\Var{X}$. Yet, we show that for a generic linear map $d$ measurements suffice for generic recoverability and $d+1$ measurements suffice for generic identifiabilty; see \cref{demo-thm}(R) and (I).

\subsection{How much does the reconstruction move?}
Besides identifiability, the other main use of the RIP is deriving upper bounds on the reconstruction error of specific algorithms \cite[Chapter 6]{FR2013}. In most of such analyses, a reconstruction algorithm is called \textit{stable} or \textit{robust} if, in some norm, the reconstruction error $\| x - x^\sharp \|$ is small. Here, $x^\sharp$ is the algorithm's output on input $y=\mu(x) + e$ and $e$ is a small error. For statements about the main algorithms in traditional compressed sensing see also \cite[Theorems 6.12, 6.21, 6.25, 6.28]{FR2013}.

The concept of \textit{stability} has many meanings in numerical analysis. \textit{Robustness} of a reconstruction algorithm in compressed sensing is different from but closely related to the \textit{forward stability} of an algorithm in numerical linear algebra.
Forward stability of an algorithm $\hat{f}$ for a function $f$ is defined in terms of \textit{condition numbers} \cite{TB1997}.
Following \cite{Rice1966}, the condition number~$\kappa[f](x)$ of a map $f : \Var{X} \to \Var{Y}$ between metric spaces $(\Var{X},\dist_\Var{X})$ and $(\Var{Y},\dist_\Var{Y})$ at a point~$x\in \Var{X}$ is
\begin{align} \tag{CN} \label{eqn_kappa}
\kappa[f](x) = \lim_{\epsilon\to 0} \sup_{\substack{y \in \Var{X},\\ \dist_\Var{X}(x,y) \le \epsilon}} \frac{\dist_\Var{Y}(f(x), f(y))}{\dist_\Var{X}(x,y)}.
\end{align}
The condition number is an intrinsic property of the function $f$. It measures how much an infinitesimal perturbation of the input is amplified in the output of the function $f$. It is a standard way to measure the \textit{sensitivity} to perturbations of a computational problem modeled by the function~$f$ \cite{TB1997,BC2013,Rice1966}. If $f$ is a differentiable function, the condition number provides an error bound of the form
\[
 \dist_\Var{Y}( f(x), f(y) ) \le \kappa[f](x) \cdot \dist_\Var{X}(x,y) + o(\dist_\Var{X}(x,y))
\]
and this bound is asymptotically sharp as $\dist_\Var{X}(x,y) \to 0$. Aside from the foregoing error bound, the condition number is also viewed as a measure of complexity or hardness of computing the output of $f$ \cite{BCSS,BC2013}.

In our setting, the role of $f$ is played by the measurement map $\mu$. An algorithm $\hat{\mu}^{-1}$ for the function $\mu^{-1}$ is called \textit{forward stable} \cite{TB1997,Higham1996} if its reconstruction error is ``not much larger'' than the \textit{backward error} (the difference between~$\mu(x)$ and~$\mu(x^\sharp)$) multiplied by the condition number.
More precisely, a compressed sensing reconstruction algorithm is forward stable if
\begin{align}\label{eqn_stab}\tag{FS}
 \underbrace{\| x - x^\sharp \|_{\bbk^n}}_\text{\changed{output error}} \le \underbrace{K}_\text{algorithm-specific} \cdot \underbrace{\kappa[\mu^{-1}](\mu(x^\sharp))}_\text{problem-specific} \cdot \underbrace{\| \mu(x) - \mu(x^\sharp) \|_{\bbk^s}}_\text{\changed{backward error}},
\end{align}
where \changed{$1 \le K < \infty$} is a constant, $\kappa[\mu^{-1}](\mu(x^\sharp))$ is the condition number relative to the distances $\dist_\Var{X}(x,x')=\|x-x'\|_{\bbk^n}$ and $\dist_{\bbk^s}(y,y')=\|y-y'\|_{\bbk^s}$, and the backward error is $\|\mu(x) - \mu(x^\sharp)\|$.
As indicated above, the condition number is a mathematical property of $\mu^{-1}$. The constant $K$ quantifies how much the algorithm $\hat{\mu}^{-1}$ amplifies the inherent sensitivity of the problem $\mu^{-1}$. Note that the algorithm-specific part of the error must be constant, while the problem-specific part depends on the problem instance that is solved.
An algorithm is forward stable if $K$ is a constant that is ``small'' in the eye of the beholder.

Inspecting \cite[Theorems 6.12, 6.21, 6.25, 6.28]{FR2013}, for example, it is not difficult to prove that all these bounds imply that the corresponding algorithms are forward stable, with an undetermined constant $K$ in \cref{eqn_stab}. The reason is as follows.

From the vantage point of numerical analysis, the RIP and bi-Lipschitz embeddings guarantee a very strong property: every reconstruction problem instance is \textit{well conditioned}. Informally, an instance of a computational problem locally modeled by a function is called \textit{well conditioned} if the condition number is small, e.g., $\kappa \approx 1$, and \textit{ill conditioned} if the condition number is large, e.g., the inverse of typical backward errors. If $\mu$ is a bi-Lipschitz map, then one has a global upper bound on the condition numbers of both $\mu$ and $\mu^{-1}$. We include the precise statement of this result for completeness; see \cite[Proposition 6.3.10]{HJE2017}.

\begin{proposition} \label{prop_RIP_implies_cond_bound}
Let $\mu : \Var{X} \to \Var{Y}$ be a $(c,C)$-bi-Lipschitz map, then we have
$$c \le \kappa[\mu](x) \le C\quad\text{and}\quad\tfrac{1}{C} \le \kappa[\mu^{-1}](\mu(x)) \le \tfrac{1}{c}$$
for all $x$,
where the condition number $\kappa$ is relative to the metrics of $(\Var{X}, \dist_\Var{X})$ and $(\Var{Y},\dist_\Var{Y})$ in \cref{eqn_kappa}.
\end{proposition}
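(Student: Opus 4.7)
The plan is to apply the definition of the condition number in \cref{eqn_kappa} verbatim, with the $(c,C)$-bi-Lipschitz inequality supplying a uniform two-sided bound on the difference quotient that sits inside the limit-supremum. Nothing beyond this unpacking of definitions is really needed.

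First, I would establish the two bounds on $\kappa[\mu](x)$. For the upper bound, every $y \in \Var{X}$ with $y \ne x$ satisfies $\dist_\Var{Y}(\mu(x),\mu(y))/\dist_\Var{X}(x,y) \le C$ by hypothesis; restricting the supremum to the pointed ball $0 < \dist_\Var{X}(x,y) \le \epsilon$ preserves this bound uniformly in $\epsilon$, so the limit is at most $C$. For the lower bound $c \le \kappa[\mu](x)$, the same inequality says that each such ratio is at least $c$, so the supremum over any non-empty pointed $\epsilon$-neighborhood is at least $c$, and the limit inherits the bound.

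Next, I would handle $\kappa[\mu^{-1}](\mu(x))$ by reducing to the previous case. Rewriting the bi-Lipschitz inequality with the substitutions $y \leftarrow \mu(x')$ and $y' \leftarrow \mu(x'')$ gives $1/C \le \dist_\Var{X}(\mu^{-1}(y),\mu^{-1}(y'))/\dist_\Var{Y}(y,y') \le 1/c$ for all $y \ne y'$ in $\mu(\Var{X})$. Thus $\mu^{-1} : \mu(\Var{X}) \to \Var{X}$ is itself $(1/C, 1/c)$-bi-Lipschitz, and the argument of the previous paragraph applied to $\mu^{-1}$ at the point $\mu(x)$ yields $1/C \le \kappa[\mu^{-1}](\mu(x)) \le 1/c$.

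The only mildly subtle point, and the closest thing to an obstacle, is to confirm that the limit in \cref{eqn_kappa} actually exists. This is automatic: the supremum is a non-increasing function of $\epsilon$ since the pointed $\epsilon$-balls shrink as $\epsilon \searrow 0$, so the limit exists as an infimum in $[0,\infty]$, and our two-sided bound in fact pins it inside $[c,C]$. One must also tacitly assume that $x$ is a non-isolated point of $\Var{X}$ so that the pointed neighborhoods are eventually non-empty; this is harmless in the geometric setting of this paper.
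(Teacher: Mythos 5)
Your proof is correct and is the natural, essentially unique argument: bound the difference quotient inside the limit-supremum uniformly using the bi-Lipschitz inequality, note the supremum is monotone in $\epsilon$ so the limit exists, and reduce the inverse-map case to the forward case by observing that $\mu^{-1}$ is $(1/C,1/c)$-bi-Lipschitz on $\mu(\calX)$. The paper itself does not spell out a proof but instead cites this as \cite[Proposition 6.3.10]{HJE2017}, so there is no in-paper proof to compare against; your write-up supplies the standard argument, and your remarks about existence of the limit and non-isolated points are exactly the right caveats.
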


A finite condition number $\kappa$ also implies the existence of the local Lipschitz constant $\kappa + \delta$ for arbitrary $\delta>0$ of $f$ in a neighborhood of $x$.

Instead of relying on the RIP or bi-Lipschitz embeddings to guarantee \textit{every} reconstruction problem instance is well conditioned, we propose to rely on the condition number of specific problem instances to quantify how difficult the reconstruction problem is. For forward, backward, and mixed stable numerical algorithms \cite{Higham1996}, the bound \cref{eqn_stab} then provides a good upper bound on the reconstruction error of a specific problem instance, even if the true solution $x$ is not known.

In the answer to \cref{Q11} below, we show that $\kappa[\mu^{-1}](\mu(x))$ can be computed from the singular values of the derivative of $\mu$. Therefore, it can be approximated numerically for differentiable measurement maps using standard numerical linear algebra algorithms. If an explicit expression of the differential of $\mu$ is unknown, one could even apply numerical differentiation techniques to approximate it. Hence, computing the condition number is often significantly easier than determining global lower and upper Lipschitz constants of $\mu$ (or $\mu^{-1})$.

By relying on the condition number to furnish reconstruction error bounds (for forward stable algorithms) and moving away from the RIP or bi-Lipschitz embeddings, we sacrifice the \textit{global} upper bound on the condition number of $\mu^{-1}$. That is, we allow that some reconstruction problems are very sensitive to tiny perturbations of the input; hence, they are numerically hard or even impossible to solve \cite{BC2013}. Because of \cref{eqn_stab}, this also means that these instances should not be expected to be solved with a small forward error \cite{Armentano}. However, this does not mean that most, typical, or average problem instances are difficult to solve accurately!

\subsection{Summary}
The $\epsilon$-RIP may be difficult or impossible to establish for complicated models~$\Var{X}$. We therefore believe that trading a global bound on the reconstruction error and global identifiabilty for an algorithmically computable local upper bound and corresponding local identifiabilty (recoverability) can often be a worthwhile tradeoff. This viewpoint is developed further in the rest of the paper.

\section{Questions \& Answers }\label{q_and_a}
We answer several individual questions about recoverability, identifiability, and continuity. Combining these in the appropriate ways establishes the well-posedness of an algebraic compressed sensing problem. The proofs for the statements in this section are given in \cref{q_and_a_proofs}. The background in algebraic geometry used in the proofs is introduced in \cref{sec:AG}. The questions and answers, however, can mostly be understood without this background.

Throughout this section, let $\Var{X}\subseteq \bbk^n$, where $\bbk=\RR$ or $\bbk=\CC$, be specified either implicitly as in (IS) or explicitly as in (ES). The definition of these models in the complex case is obtained by replacing $\RR$ by $\CC$ in both settings. Our answers are general: they do not rely on specific properties of a particular model.

The following discussion centers around the numbers $d,s>0$. The first is the dimension of $\Var{X}$ (real dimension if $\bbk=\mathbb R$, and complex dimension if $\bbk=\mathbb C$). This number $d$ is fixed, as the model $\Var{X}$ specifying the signal structure is assumed to be fixed beforehand. The number $s$ is the number of measurements and can be varied through the design of the measurement map  \(
\mu : \Var{X}\to \bbk^s
\) in \cref{eqn_ip}.
Of particular interest is the case where $\mu$ is the restriction to $\calX$ of a linear or an affine linear map $\hat{\mu}:\bbk^n\rightarrow \bbk^s$. An affine linear map has the form $\hat{\mu}(x)=Ax+b$, where $A\in\bbk^{s\times n}$ and $b\in \bbk^s$. Since shifting by a vector is an isomorphism of $\bbk^s$ (as an affine variety), geometric properties of the map $\hat{\mu}$ are directly related to the analogous properties of the \emph{linear map} $x\mapsto Ax$. For this reason, we restrict the analysis to linear maps. Some of the questions below concern the case where $\mu$ is a polynomial map.

We first discuss existence. This consists of deciding whether a given $y \in \bbk^s$ sits in the image of the model $\Var{X}$ under the measurement map $\mu$. Next, we consider recoverability, which consists of determining under which conditions the preimage $\mu^{-1}(\mu(x))$ is finite for a point $x \in \calX$. Then, we discuss identifiability, which consists of determining when the preimage is a single point. Finally, we investigate the local continuity of the elements in the fiber of $\mu$.

Many of the statements below are formulated for ``generic'' data points. This concept is explained formally in \cref{subsec: topology} below. In particular, in all the questions below, generic points form an open dense subset of all points.

\subsection{Existence}

The first question concerns the existence of solutions.

\begin{question}\label{QExistence}
 Under what conditions do there exist solutions $y = \mu(x)$ for a given point~$y \in \bbk^s$?
\end{question}
\begin{answer}
First, suppose that $\Var X$ is the zero locus of the polynomials $f_1,\dots,f_k$ as in (IS). Then for a fixed measurement $y=(y_1,\dots,y_s)$ the equality $y = \mu(x)$ holds for some $x\in \Var X$ if and only if the following system of $k+s$ polynomial equations in the variables $x_1 \vvirg x_n$ has a solution:
\begin{align*}
f_1(x) = \cdots =f_k(x) = y_1- \mu_1(x) = \cdots =  y_s- \mu_s(x) = 0.
\end{align*}
Now suppose that $\Var X$ is given as image of a polynomial parameterization as in (ES). Then for a fixed $y$, the equality $y = \mu(x)$ holds for some $x\in X$ if and only if the following system of $s$ polynomial equations in the variables $t_1 \vvirg t_\ell$ has a solution:
\begin{align*}
y_1 - \mu_1(\phi_1(t),\ldots,\phi_n(t)) = \cdots =y_s - \mu_s(\phi_1(t),\ldots,\phi_n(t))= 0.
\end{align*}

The existence of solutions of a polynomial system $g_1 = \cdots = g_N = 0$ as above is characterized as follows.

For $\bbk =\CC$, the system has solutions if and only if the ideal $\langle g_i : i =1 \vvirg N \rangle$ does not contain $1$. Explicitly, this condition can be tested using Gr\"obner basis methods \cite{whatisGrobner,CLO2015,SingularCommAlg,sturmfels1996grobner,Sturmfels02solvingsystems}, which are implemented in most computer algebra systems.

For $\bbk = \bbR$, the system has a solution if and only if the \emph{real radical}, in the sense of \cite[Section 4.1]{BCR1998}, does not contain $1$.
\end{answer}

We note that neither for $\bbk =\RR$ nor for $\bbk =\CC$ it is enough to check if some polynomials vanish only at $y$ to guarantee the existence of $x\in \Var X$ with $y=\mu(x)$. The reason is that a projection of a variety is not necessarily a variety. However, a consequence of Chevalley's Theorem (see \cref{chevalley}) is that, in the case $\bbk =\CC$, there exists a finite number of polynomials $\{h_i(y) : i = 1 \vvirg M\}$, such that the vanishing or non-vanishing of $h_i(y)$ for a fixed $y$ determines whether there is $x \in \calX$ with $y = \mu(x)$. Algorithms providing these $h_i$ were implemented in \cite{harris2019computing, barakat2019algorithmic}. For $\bbk =\RR$ one needs to check a finite number of polynomial equations and polynomial \emph{inequalities}; an implementation is provided in the Maple package \texttt{RegularChains}.

While the above answer is necessary and sufficient, it is substantially less practical than all of the Q\&As that will follow. In fact, deciding whether a solution to a system of polynomial equations exists is an NP-hard decision problem, see, e.g., \cite{FraYes:ComplexitySolvingAlgebraicEquations}. Nevertheless, one can resort to numerical methods such as certified homotopy continuation \cite{BL2012,HS2012}, which can find approximate numerical roots. In practice, uncertified homotopy continuation can be a faster alternative with similar accuracy in many cases of practical interest \cite{BT2018,Lee:Li:Tsai:2008,Bertini,NumericalAlgebraicGeometryArticle,Verschelde:PHCpack,TvBV2020}. Lairez \cite{Lairez2017,Lairez2020} proved that finding one approximate root of (certain) random polynomial systems has a quasi-linear time complexity in the Blum--Shub--Smale model of computation \cite{BSS1989}.

\subsection{Recoverability}
We start with a remarkable, fundamental result about recoverability of generic linear measurement maps.
We call a map $\mu : \Var{X} \to \bbk^s$ \textit{recoverable at a point} $x \in \Var{X}$ if the fiber $\mu^{-1}(\mu(x))$ is finite. A map is \textit{everywhere recoverable} if it is recoverable at every point $x\in \Var{X}$. A map is \emph{generically recoverable} if it is recoverable for generic points $x\in \Var{X}$.

\begin{question}\label{question:finite map}
Under what conditions is a generic affine linear map $\mu: \Var{X}\rightarrow \bbk^s$ everywhere recoverable?
\end{question}
\begin{answer}
A generic affine linear map is everywhere recoverable if and only if the number of measurements is $s\geq d$.

In fact, if $\bbk = \bbC$ and $s=d$, the number of solutions of \cref{eqn_ip} at a generic problem instance $y \in \mu(\Var{X})$ equals a constant $\delta$, called the \textit{degree of $\Var{X}$}, see \cref{sec:degree}. In the real case (i.e., $\bbk = \bbR$), the number of real solutions for a generic problem instance is bounded above by that same $\delta$.
\end{answer}

The previous Q\&A can be extended to certain polynomial maps.
\begin{question}\label{question:finite map2}
Under what conditions is a measurement map $\mu: \Var{X}\rightarrow \bbk^s$ defined by $s$ generic (non-homogeneous) polynomials of degree $p$ everywhere recoverable?
\end{question}
\begin{answer}
A map defined by $s$ generic polynomials of degree equal to $p$ is everywhere recoverable if and only if $s\geq d$.

For $\bbk = \bbC$ and $s=d$, the number of solutions of \cref{eqn_ip} at a generic problem instance $y \in \mu(\Var{X})$ equals $\delta \cdot p^d$ where $\delta$ is the degree of $\Var{X}$. If $\bbk =\bbR$, the number of real solutions for a generic problem instance is bounded from above by~$\delta \cdot p^d$.
\end{answer}

The answer to \cref{question:finite map2} is implied by applying the \emph{Veronese embedding} to the setting of \cref{question:finite map}. We believe the answer to \cref{question:finite map2} to hold in the more general case where $\mu$ is a generic polynomial map with components of possibly different degrees. We expect that a proof should follow by adapting the proof of Noether's Normalization Lemma (\cref{thm:Noether Norm}) to this general setting. We leave this for future work.

\begin{example}[Random maps]
The space of linear maps and the space of polynomial maps with a fixed degree are finite dimensional vector spaces. One can randomly sample such maps from a probability distribution that is absolutely continuous with respect to the Lebesgue measure. The answers to \cref{question:finite map,question:finite map2} guarantee that such randomly sampled maps are recoverable with probability $1$.
This includes all linear and polynomial maps whose coefficients are sampled identically and independently distributed from any continuous probability distribution, such as \texttt{randn} and \texttt{rand} in Julia, Matlab, Octave, and Python.

There is a vast body of literature on robust recovery from random measurements. For instance, the case of low-rank matrix recovery from Gaussian measurements is discussed in \cite{8204873}: the authors show that $10r(m+n)$ Gaussian measurements suffice to robustly recover rank-$r$ matrices in $\mathbb R^{m\times n}$ via nuclear norm minimization. However, the dimension of the variety of rank-$r$ matrices is $d=r(m+n-r)$, so the answer to \cref{question:finite map} guarantees that $s=d$ measurements are enough for recoverability. This provides a significant improvement over \cite{8204873}, albeit with only local guarantees on the robustness of the reconstruction.
\end{example}

In \textit{data completion} or \textit{imputation}, one tries to recover a structured signal $x \in \Var{X}$ from partially observed data, i.e., a subset of the coordinates of $\bbk^n$. In this case, the measurement map is a \textit{coordinate projection}. These are specific linear maps for which the foregoing Q\&A's do not apply a priori. Even randomly selecting the~$s$ coordinates on which to project does not guarantee a generic linear measurement map in the sense of \cref{question:finite map,question:finite map2}. Nevertheless, for coordinate projections we have the following result.
\begin{question}\label{Q_coordinate_recovery}
Under what conditions does there exist a coordinate projection $\hat{\mu}:\bbk^n \to \bbk^s$ that is generically recoverable when restricted to $\Var{X}$?
\end{question}
\begin{answer}
There exists a generically recoverable coordinate projection if and only if $s\geq d$.
\end{answer}

The following example shows that \textit{everywhere} recoverability is not possible in general for coordinate projections. More precisely, the only everywhere recoverable coordinate projection is in general the identity map $\hat{\mu} : \bbk^n \to \bbk^n$.

\begin{example}\label{coord_proj_not_recoverable}
We provide an example of a variety $\calX \subseteq \bbk^n$ with the property that every coordinate projection $\mu: \calX \to \bbk^s$, with $s < n$, has at least one infinite fiber. Consider the parameterization $\phi : \bbk^2 \to \bbk^n$ defined by $\phi(t_1,t_2) = (\phi_1 (t_1,t_2) \vvirg \phi_n(t_1,t_2))$, where
\(
 \phi_j(t_1,t_2) = t_2 \cdot \textprod_{i \neq j} (t_1 - i).
\)
It is easy to show that the Jacobian matrix of $\phi$ has rank $2$ for every $(t_1,t_2)$ with $t_2 \neq 0$. Let $\calX = \bar{\im(\phi)}$, where the overline denotes the closure in the Euclidean topology. This is an algebraic variety of dimension $d = 2$. 
For $j = 1 \vvirg n$, we have
\[
\phi(j,t_2) = t_2 (0 \vvirg 0, \underbrace{\textprod_{i \neq j} (j - i)}_{j\text{-th entry}}, 0 \vvirg 0 ).
\]
In particular, the lines corresponding to the coordinate axes of $\bbk^n$ are contained in $\calX$. Now, every coordinate projection maps at least one coordinate axis to $0$. Therefore, if $\mu: \calX \to \bbk^s$ is a coordinate projection with $s < n$, the fiber $\mu^{-1}(y)$ for~$y = 0 \in \bbk^s$ is always infinite, containing at least one of these lines.
\end{example}

We stress that the answer to \cref{Q_coordinate_recovery} only guarantees existence of a generically recoverable coordinate projection.
We do not claim that most or even several coordinate projections with $s\ge d$ are generically recoverable. In fact, constructions similar to the one of \cref{coord_proj_not_recoverable} provide examples of algebraic varieties with the property that only some specific coordinate projections are generically recoverable. An example of this behavior is presented next.

\begin{example}[Rank-$1$ matrix recovery] \label{example_recov_lr}
Let $\calX$ be the variety of $2 \times 3$ matrices of rank at most $1$, that is $\calX = \phi( \bbk^5)$, with
\begin{equation}\label{eqn: map parameterizing rank one}
\begin{aligned}
\phi: \bbk^5 \to \bbk^{2 \times 3},\quad
(t_1 \vvirg t_5) \mapsto \begin{bmatrix}t_1 \\  t_2 \end{bmatrix} \begin{bmatrix} t_3 & t_4 & t_5\end{bmatrix}.
\end{aligned}
\end{equation}
One can verify that $d = \dim \calX = 4$.
Let $\mu$ be the measurement map defined by restricting the coordinate projection
\begin{align*}
\hat{\mu}: \bbk^{2 \times 3} \to \bbk^{4},\quad
 \begin{bmatrix}
  x_1 & x_2 & x_3 \\
  x_4 & x_5 & x_6
 \end{bmatrix}
\mapsto (x_1,x_2,x_4,x_5)
\end{align*}
to $\Var{X}$.
There is no point $y \in \bbk^4$ at which this coordinate projection is recoverable. Indeed, if $A = \left[\begin{smallmatrix}t_1 \\  t_2 \end{smallmatrix}\right] \left[\begin{smallmatrix} t_3 & t_4 & t_5\end{smallmatrix}\right]$, then
\(
\mu^{-1}(\mu(A)) \supseteq \left\{ \left[\begin{smallmatrix}t_1 \\ t_2 \end{smallmatrix}\right] \left[\begin{smallmatrix} t_3 & t_4 & t' \end{smallmatrix}\right] : t' \in \bbk \right\};
\)
hence, every fiber has dimension (at least) one.

On the other hand, \changed{every} coordinate projection \changed{that forms a ``cross'' as discussed in \cite{BR2000}, like}
\begin{align*}
\tilde{\mu}: \bbk^{2 \times 3} \to \bbk^{4},\quad
 \begin{bmatrix}
  x_1 & x_2 & x_3 \\
  x_4 & x_5 & x_6
 \end{bmatrix}
\mapsto (x_1,x_2,x_3,x_4),
\end{align*}
is generically recoverable on $\Var{X}$. Note that $\tilde{\mu}$ is not everywhere recoverable: if $x_1=x_4 = 0$, the fiber $\mu^{-1}(\mu(A))$ is at least $1$-dimensional. Like \cref{coord_proj_not_recoverable}, there is no everywhere recoverable coordinate projection except for the identity map.

\changed{Conditions under which recoverability and identifiability of low-rank matrix completion problems holds were discussed in \cite{BR2000,SC2010,KTT2015,GHIL:complexity_lin_circuits_and_geometry,Tsakiris2021,Tsakiris2021b}, among others.}
\end{example}

Given a polynomial measurement map, one can explicitly determine whether it is generically recoverable by computing the rank of the differential of a suitable map. This is discussed next.

\begin{question}\label{question: specific map is recoverable}
Given a specific polynomial map $\mu : \calX \to \bbk^s$, how can one determine whether~$\mu$ is generically recoverable?
\end{question}
\begin{answer}
First consider the implicit setting (IS) and let the variety be given as $\Var{X} = \{ x \in \bbk^n : f_1(x) = \cdots = f_k(x) = 0 \}$. Write $f = (f_1 \vvirg f_k)$, which can be regarded as a polynomial map from $\bbk^n$ to $\bbk^k$. A sufficient condition to guarantee that $\mu$ is generically recoverable is that there exists a point $x \in \calX$ satisfying
\[
\ker (\deriv{\mu}{x}) \cap \ker (\deriv{f}{x}) = 0 .
\]
Explicitly, regarding $\deriv{\mu}{x}$ as an $s \times n$ matrix and $\deriv{f}{x}$ as a $k \times n$ matrix, this condition is equivalent to the $(s+k) \times n$ matrix $\left[ \begin{smallmatrix} \deriv{\mu}{x} \\ \deriv{f}{x} \end{smallmatrix}\right]$ having rank $n$.

Now consider the explicit setting (ES), and let $\calX$ be parameterized explicitly by $\phi = (\phi_1 \vvirg \phi_n) : \bbk ^\ell \to \bbk^n$.  Then a sufficient condition to guarantee that $\mu$ is generically recoverable is that there exists a point $t \in \bbk^\ell$ such that the Jacobian matrix of $\mu \circ \phi : \bbk^\ell \to \bbk^s$ at $t$ has rank $d = \dim \Var{X}$.
\end{answer}

\begin{remark} There may be several ways to present $\Var{X} = \{ x \in \bbk^n : f_1(x) = \cdots = f_k(x) = 0 \}$ and even the ideal $(f_1,\dots,f_k)$ generated by the $f_i$'s may change. It is always best to choose the $f_i$'s that generate the whole ideal of polynomial functions that vanish on $\Var{X}$.
\end{remark}
\begin{remark}
We note that the condition that a map \changed{$\mu : \Var{X} \to \bbk^s$} is generically recoverable is equivalent to the fact that the dimension of the image \changed{$\mu(\Var{X})$} is equal to $d$.
\end{remark}

The second criterion is especially convenient for low-dimensional varieties embedded in a high-dimensional ambient space. In this case, it suffices to compute the rank of an $\ell \times s$ matrix, where $\ell$ is the number of parameters in the parameterization of $\Var{X}$ and $s$ the number of measurements, both of which can be similar in size to $d =\dim \Var{X}$. In a numerical setting, computing the rank consists of counting the number of significant singular values.

The following question addresses explicitly the case of coordinate projection, in the setting of parameterized models.

\begin{question}\label{Q_which_coordinate_projection}
If $\Var{X}$ is explicitly parameterized by $\phi: \bbk^\ell \to \bbk^n$ as in (ES), how can one determine a subset of $s \geq d$ coordinates such that the coordinate projection on these $s$ coordinates is generically recoverable?
\end{question}
\begin{answer}
The $s$ distinct coordinates $i_1 \vvirg i_s$ should be selected so that the $s \times \ell$ matrix $\left[ \frac{\partial \phi_{i_p}}{\partial t_j}\right]_{\substack{p = 1 \vvirg s \\ j = 1 \vvirg \ell}}$ has rank exactly $d$ for at least one $t \in \bbk^\ell$.
\end{answer}

We can now use this answer to understand \cref{example_recov_lr}.

\begin{example}[Rank-$1$ matrix recovery, continued I]\label{ex_rank1cont}
Write $\phi_{ij}$ for the components of the map $\phi$ of \eqref{eqn: map parameterizing rank one}. In \cref{example_recov_lr}, we claimed that the coordinate projection defined by $\hat{\mu} ( \left[\begin{smallmatrix} x_1 & x_2 & x_3 \\ x_4 & x_5 & x_6 \end{smallmatrix}\right] ) = (x_1,x_2,x_4,x_5)$ is not recoverable. We verify that the sufficient condition from \cref{question: specific map is recoverable} is not satisfied. Indeed, the Jacobian matrix of $\mu \circ \phi : (t_1 \vvirg t_5) \mapsto (t_1t_3,t_1t_4,t_2t_3,t_2t_4)$ is
\[
J = \deriv{(\mu \circ \phi)}{t} = \begin{blockarray}{cccccc}
  & \partial_1& \partial_2 & \partial_3 & \partial_4 & \partial_5 \\
  \begin{block}{c[ccccc]}
 \phi_{1,1} & t_3 & 0 & t_1 & 0 & 0 \\
 \phi_{1,2} & t_4 & 0 & 0 & t_1& 0 \\
 \phi_{2,1} & 0 & t_3 & t_2 & 0 & 0 \\
 \phi_{2,2} & 0 & t_4& 0 & t_2 & 0 \\
  \end{block}
 \end{blockarray},
\]
which has rank at most $3$ for every choice of $(t_1 \vvirg t_5)$ \changed{because $t_1 \partial_1 + t_2 \partial_2 = t_3 \partial_3 + t_4 \partial_4$}.

On the other hand, the coordinate projection $\tilde{\mu}$ is generically recoverable. In this case $\tilde{\mu} \circ \phi (t_1 \vvirg t_5) = (t_1t_3,t_1t_4,t_1t_5,t_2t_3)$ and its Jacobian matrix is
\begin{equation}\label{jacobian_f}
\tilde{J} = \deriv{(\tilde{\mu}\circ \phi)}{t} = \begin{blockarray}{cccccc}
  & \partial_1 & \partial_2 & \partial_3 & \partial_4 & \partial_5 \\
  \begin{block}{c[ccccc]}
 \phi_{1,1} & t_3 & 0 & t_1 & 0 & 0 \\
 \phi_{1,2} & t_4 & 0 & 0 & t_1 & 0 \\
 \phi_{1,3} & t_5 & 0 & 0 & 0 & t_1 \\
 \phi_{2,1} & 0 & t_3 & t_2 & 0 & 0 \\
  \end{block}
 \end{blockarray}.
\end{equation}
For $t_1=t_3=1$ and $t_2=t_4=t_5=0$, the $\rank(\tilde{J}) = 4$. Hence, $\tilde{\mu}$ is generically recoverable by \cref{question: specific map is recoverable}.
\end{example}

\subsection{Identifiability}
Recall that a map $\mu:\calX\to\bbk^s$ is \textit{identifiable at a point} $x \in \Var{X}$ if the fiber $\mu^{-1}(\mu(x))=\{x\}$ consists of a single point. We say that $\mu$ is \emph{everywhere identifiable} if it is identifiable at every $x \in \Var{X}$ and that it is \textit{generically identifiable} if it is identifiable at generic points $x\in \Var{X}$.

In this section we discuss three settings. The first one considers identifiability for generic maps and generic points on $\Var{X}$. This is the same setting as the one discussed in the case of recoverability in the previous subsection.

\begin{question}\label{Q3}
Under what conditions is a generic linear map $\mu: \Var{X}\rightarrow \bbk^s$ generically identifiable?
\end{question}
\begin{answer}
If $\calX$ is a linear space, then $\mu$ is generically identifiable if and only if $s \geq d$.  If $\calX$ is not a linear subspace and if $s \geq d+1$, then $\mu$ is generically identifiable. If $\bbk = \bbC$, then the condition $s \geq d+1$ is also necessary.
\end{answer}

The analogous question for polynomial maps has a similar answer.
\begin{question}\label{Q4}
Under what conditions is a map $\mu: \Var{X}\rightarrow \bbk^s$ defined by $s$ generic polynomials of degree~$p$ generically identifiable?
\end{question}
\begin{answer}
Such a polynomial map is generically identifiable if $s\geq d+1$. When $\bbk=\CC$, $d>0$, and $p>1$, the condition $s \geq d+1$ is necessary as well.
\end{answer}

Now, we move from questions about generic data to questions about properties that hold \emph{everywhere}. In \cref{ques:inj} we wonder when a generic linear map is everywhere identifiable and \cref{ques:inj1} asks for the existence of a Euclidean open set of such maps.

A key role in these questions is played by the \emph{set of differences}
\[
\Delta(\Var{X}) := \{x_1-x_2  :  x_1,x_2\in \Var{X}\} \subseteq \bbk^n.
\]
This is the image of the map $\Var{X} \times \Var{X} \to\bbk^n, (x_1,x_2)\mapsto x_1-x_2$. The closure of $\Delta(\calX)$ is an algebraic variety of dimension at most $2d$. When $\Var{X}$ is a cone over $0 \in \bbk^n$, then $\Delta(\Var{X})$ is the \textit{Minkowski sum} of $\Var{X}$ with itself, and its closure is called the \emph{secant variety} of $\Var{X}$ \cite{Harris1992,Russo:GeometrySpecialProjVars}.

One can immediately observe that for a linear map $\hat{\mu}: \bbk^n \to \bbk^s$, one has
\[
 \hat{\mu}(x_1) = \hat{\mu}(x_2) \quad\text{if and only if}\quad \hat{\mu}(x_1 - x_2) = 0.
\]
In other words, $\mu=\hat{\mu}|_\calX$ is everywhere identifiable if and only if the kernel $\ker(\hat{\mu})$ intersects $\Delta(\Var{X})$ only at $0 \in \bbk^n$.

\begin{question}\label{ques:inj}
Under which conditions is a generic linear map $\mu: \Var{X}\rightarrow \bbk^s$ everywhere identifiable?
\end{question}
\begin{answer}\label{answ:inj}
A generic linear map $\mu$ is everywhere identifiable if $s \ge 1+\dim \Delta(\Var{X})$. In particular, if $s \ge 2d+1$, $\mu$ is everywhere identifiable.
\end{answer}

We provide two interesting examples; one where the answer to \cref{ques:inj} is not optimal, and one where it is.

\begin{example}[Nonoptimality of \cref{answ:inj} for cones]
Suppose $\Var{X}$ is a cone over $0 \in \bbk^n$: for every $x\in \Var{X}$ and every $t\in \bbk$, one has $tx\in\Var{X}$. Then, $\Delta(\Var{X})$ is also a cone over $0$; let $d'$ be its dimension. In this case, a generic projection to~$\bbk^{d'}$ is injective on $\calX$, and therefore everywhere identifiable.
\end{example}
\begin{example}[Optimality of \cref{answ:inj} for the twisted cubic]\label{exam: twisted cubic}
Let $\Var{X}$ be the curve in~$\CC^3$ defined as the image of $\phi:\CC\to\CC^3, t\mapsto (t,t^2,t^3)$. In classic algebraic geometry this is called the \emph{twisted cubic curve} in $\bbC^3$. The set of differences is
\[
\Delta(\calX) = \{ (t_1-t_2, t_1^2 - t_2^2, t_1^3-t_2^3) : t_1,t_2 \in \bbC\}.
\]
 In this case, a generic line through $0$ intersects $\Delta(\calX)$ in a point different from $0$. Therefore, a generic linear map $\hat{\mu}: \bbC^3 \to \bbC^2$ satisfies $\ker(\hat{\mu}) \cap \Delta(\calX) \neq 0$. This determines a point (in fact at least two points) $x \in \calX$ at which $\mu$ is not identifiable. Hence, the bound $s \geq \dim \Delta(\calX) + 1 = 3$ of the answer to \cref{ques:inj} is optimal.

There can be special projections with $s < \dim \Delta(\calX) +1$ taking $\calX$ injectively to~$\bbC^s$, even if generic projections are not injective. For instance, in the case of the twisted cubic, the projection $(x_1,x_2,x_3) \mapsto x_1$ maps~$\calX$ injectively to $\bbC^1$.
\end{example}

The precise meaning of \emph{generic} is given in terms of open subsets in the Zariski topology, as explained in Section \ref{sec:AG}. Over the complex numbers, these open sets are dense, with respect to the classic Euclidean topology, in their Zariski closure. As a consequence, requiring a property to hold generically is usually a strong condition. It is therefore natural to wonder whether one can relax the genericity condition, requiring that a property holds on a set containing a Euclidean open subset. Over the real numbers, it is often possible to determine Euclidean open subsets of linear projections $\hat{\mu} : \bbR^n \to \bbR^s$ which are identifiable (over the reals) even though the corresponding extension $\hat{\mu}_\bbC : \bbC^n \to \bbC^s$ is not. However, the existence of such open sets of projections depends on the geometry of the model, and not only on its dimension, as pointed out in the next question.

\begin{question}\label{ques:inj1}
Under what conditions does there exist a Euclidean open set of everywhere identifiable linear maps $\mu: \Var{X}\rightarrow \RR^s$?
\end{question}
\begin{answer}
There is an open set of everywhere identifiable linear maps if we have $s\geq 1+\dim \Delta(\Var{X})$. In particular, this holds if $s \ge 2d+1$.
\end{answer}

We provide an example of a model $\calX$ for which there exists an open set of identifiable linear maps $\mu: \calX \to \bbR^s$ for $s < 1+\dim(\Delta(\Var X))$.

\begin{example}[The twisted cubic reconsidered]\label{example_real_twisted_cubic}
Let $\calX$ be the twisted cubic from Example \ref{exam: twisted cubic}. Its set of differences over the reals is
\[
\Delta(\Var{X})=\{(t_1-t_2,t_1^2-t_2^2,t_1^3-t_2^3)  :  t_1,t_2\in \RR\}.
\]
In particular, if $(x_1,x_2,x_3) \in \Delta(\calX)$, then $x_1$ and $x_3$ have the same sign. As a consequence, any linear map $\hat{\mu}: \bbR^3 \to \bbR^2$ whose kernel is the line $\ker \hat{\mu} = \langle (1,\alpha,\beta) \rangle$ for some $\beta < 0$ satisfies $\ker(\hat{\mu}) \cap \Delta(\calX) = 0$, so that $\mu$ is everywhere identifiable. This is a Euclidean open set of linear maps.
\end{example}

Finally, we return to the case of coordinate projections from data completion applications. We stress that it is not possible in general to guarantee identifiability results in this special setting. This is highlighted in the following example.

\begin{example}\label{exam: even powers non identifiable}
 Let $\calX$ be an algebraic variety defined implicitly as the vanishing locus of polynomials $f_1 \vvirg f_k$ and assume $\calX$ does not lie in a coordinate hyperplane. Suppose that each variable $x_i$ appears in $f_1 \vvirg f_k$ only with even powers. In this case, if $( \xi_1 \vvirg \xi_n)$ is a point of $\calX$, then any point of the form $(\pm \xi_1 \vvirg \pm \xi_n)$ belongs to $\calX$ as well. Consequently, if $\hat{\mu} : \bbk^n \to \bbk^s$ is a coordinate projection different from the identity, then $\mu=\hat{\mu}|_\calX$ is not generically identifiable. For instance, if $x = (\xi_1 \vvirg \xi_n) \in \calX$ is a generic point, and $\hat{\mu}$ is the map that projects away the last coordinate, then $\mu^{-1}(\mu(x)) = \{ (\xi_1 \vvirg \xi_{n-1},\pm \xi_n)\}$. Note that $\xi_n \ne 0$ because of the genericity condition and the fact $\calX$ does not lie in a coordinate hyperplane.

 A minimal example is the following:
 \[
  \calX = \left\{ (x_1,x_2,x_3) \in \bbk^3 : x_1^2 + x_2^2 + x_3^2 - 1 = x_1^2x_2^2-x_3^4-1 = 0 \right\}.
 \]
In this case, the variety $\calX \subseteq \bbk^3$ has dimension $1$ and does not admit a generically identifiable coordinate projection. On the other hand, a generic projection to $\bbk^2$ is generically identifiable.
\end{example}

A consequence of \cref{exam: even powers non identifiable} is that in general the only everywhere identifiable coordinate projection is the identity map. This also follows from the analogous result on recoverability, as seen in \cref{coord_proj_not_recoverable}.

\begin{example}[The rational normal curve]
The $1$-dimensional rational normal curve is again an interesting example:
\(
 \mathcal X = \{ (t, t^2, t^3, \dots, t^n)  :  t \in \bbk \} \subset \bbk^n.
\)
The answer to \cref{ques:inj} ensures there is an identifiable projection to $1 + \dim \Delta(\calX) = 3$ coordinates. For example, projecting to the first three coordinates $(t, t^2, t^3)$ enables us to recover $t$ (from the first element alone). In fact, this shows once more that the answer to \cref{ques:inj} is not optimal: a projection to the first coordinate of $\bbk^n$ is already everywhere identifiable.

On the other hand, there are several generically nonidentifiable coordinate projections, even using much more than $1 + \dim\Delta(\calX)=3$ coordinates. Consider for example projecting to any subset of $3 \le s \le \frac{n}{2}$ even coordinates. This leaves at least two possibilities for the reconstruction: If $(t_0,t_0^2,t_0^3,\ldots,t_0^n) \in \calX$ is one solution, then necessarily $(-t_0,t_0^2,-t_0^3,\ldots,(-t_0)^n)$ is another solution.
\end{example}

We conclude the discussion of identifiability emphasizing the intricacy of the questions in this section, even in the mathematically friendlier setting of complex projective geometry. For example, it is an open problem whether a $d$-dimensional complex projective variety may be algebraically injected to a projective space of dimension $2d$ using arbitrary algebraic maps; see \cite{BV, Paul2, Paul1} and \cite[Question 3]{jaOber}. A weaker negative result is known:
\changed{there are algebraic curves $\calX$ in a complex $3$-dimensional projective space that cannot be injectively projected to the projective plane \cite{Piene}. In our affine setting this corresponds to an algebraic surface ($d=2$), inside a $4$-dimensional space ($n=4$) so that for any projection to a plane ($s=2$) there exist two lines inside the surface that are mapped to the same line.}

Finally, we point out that the structure described in \cref{exam: even powers non identifiable} is very special and in most cases we expect a generically identifiable coordinate projection $\bbk^n \to \bbk^s$ to exist for $s = d+1$, as in the case of generic projections. We sketch an argument we believe could be formalized to guarantee the existence of such projection. Recall that from the answer to \cref{Q_coordinate_recovery}, there exists a generically recoverable coordinate projection to $d$ coordinates. Let $\mu: \calX \to \bbk^d$ be such projection. A classic construction in algebraic geometry is the \emph{second fiber power of $\calX$ over $\mu$}, denoted $\calX^{\mu[2]}$. Informally, this is an algebraic variety containing pairs of points having the same image under $\mu$. The variety $\calX^{\mu[2]}$ is reducible; its irreducible components of highest dimension have dimension $d$ and there are at least two of them. One can prove that if the number of $d$-dimensional components of $\calX^{\mu[2]}$ is exactly two, then there is a coordinate projection ${\mu}' : \calX \to \bbk^{d+1}$ which is generically identifiable; in fact, ${\mu}'$ factors $\mu$, in the sense that $\mu$ projects away all the coordinates projected away by ${\mu}'$ plus an additional one.

\subsection{Continuity}\label{sec_continuity}
The answers from the previous subsections give several conditions under which the measurement map is locally or globally injective. In particular, under these conditions, the measurement map admits a (local) inverse. The final requirement for an inverse problem to be well posed is that this (local) inverse map is at least continuous. In fact, much stronger properties hold.

First, we recall terminology to succinctly ask the questions. We refer to \cite{Spivak1965,Lee2013} for the theory and definitions. A map between manifolds is called \textit{smooth} if it can be differentiated arbitrarily many times; that is, if it is $C^\infty$ on the local charts defining the manifolds. Smooth maps are continuous and locally Lipschitz continuous.

Let $\calM$ be a smooth $\bbk$-manifold, and let $\mu : \Var{M} \to \bbk^s$ be a map. The map $\mu$ is a \textit{global diffeomorphism onto its image} if $\mu(\Var{M})$ is a smooth manifold, $\mu$ is smooth and injective, and $\mu^{-1}$ is smooth \cite{Lee2013}. The map $\mu$ is a \textit{local diffeomorphism onto its image} at $x \in \Var{M}$ if there is a Euclidean open neighborhood $U_x \subseteq \Var{M}$ of $x$ such that the restriction $\mu|_{U_x}$ is a global diffeomorphism onto its image and $\mu(U_x)$ is open in~$\mu(\calM)$ \cite{Lee2013}.

We investigate which measurement maps $\mu$ are local or global diffeomorphisms.

\begin{question}\label{Q8}
Under what conditions does a generic linear map $\mu: \Var{X}\rightarrow \bbk^s$ have the property that there exists a Zariski open manifold $\Var X'\subseteq \Var X$ such that $\mu(\Var X')$ is a smoothly embedded submanifold of $\bbk^s$ and $\mu|_{\Var X'}$ is a local diffeomorphism onto its image at every point $x\in\Var X'$?
\end{question}

\begin{answer}
There exists such a manifold $\calX'$ if $s \geq d$. Moreover, the bound is sharp, independently of the variety $\calX$.
\end{answer}

\begin{question}\label{Q9}
Under what conditions does a generic linear map $\mu: \Var{X}\rightarrow \bbk^s$ have the property that there exists a Zariski open manifold $\Var X'\subset \Var X$ such that $\mu(\Var X')$ is a smoothly embedded submanifold of $\bbk^s$ and $\mu|_{\Var X'}$ is a global diffeomorphism onto its image?
\end{question}

\begin{answer}
There exists such a manifold $\calX'$ if $s \geq d+1$. If $\calX$ is not an affine space, and if $\bbk = \bbC$, the bound is sharp independently of the variety $\calX$.
\end{answer}

The fact that in both these questions $\mu(\Var X')$ is a smoothly embedded submanifold of $\bbk^s$ is important when solving inverse problems using methods from Riemannian optimization \cite{AMS2008}.

The proofs to the answers of \cref{Q8,Q9} reveal the structure of $\mathcal X'$. In the setting of \cref{Q8}, the manifold $\calX'$ has the property that $\calX \setminus \calX'$ contains the singular locus of $\mathcal X$, the locus of points $x\in\mathcal X$ with $(T_x\mathcal X)\cap \ker(\mu) \neq 0$, and the preimage of the singular locus of $\mu(\calX)$. In the setting of \cref{Q9}, $\calX \setminus \calX'$ additionally contains the set of points $x\in\mathcal X$ on which $\mu$ is not injective.

\begin{example}[Rank-1 matrix recovery, continued II]
Let $\calX$ be the variety of $2 \times 3$ matrices of rank one, parameterized as in \cref{eqn: map parameterizing rank one}.
In \cref{ex_rank1cont}, we proved that the coordinate projection $\tilde{\mu}$ defined by $\tilde{\mu}\left(\left[\begin{smallmatrix} x_1 & x_2 & x_3 \\ x_4 & x_5 & x_6 \end{smallmatrix}\right]
\right) = (x_1,x_2,x_3,x_4)$ is generically recoverable.
Let $\calX' = \calX \cap \{ x_1 \neq 0\}$, which is Zariski open in $\calX$. Then, a point $y =(x_1 \vvirg x_4) \in \mu(\calX) \subseteq \bbk^4$ has a unique preimage
\[
 x = (\mu|_{\calX'})^{-1} (x_1 \vvirg x_4) =
 \begin{bmatrix}
  x_1 & x_2 & x_3 \\
  x_4 & x_4 x_1^{-1} x_2 & x_4 x_1^{-1} x_3
 \end{bmatrix}.
\]
This shows that $\mu|_{\calX'}$ is smooth and injective. The argument of Example \ref{ex_rank1cont} shows that $\deriv{\tilde{\mu}}{x} : T_x \calX \to \bbk^4$ is a linear isomorphism for every $x \in \calX'$. Consequently, the image $\tilde{\mu}(\Var{X}')$ is an immersed submanifold of $\bbk^4$ by \cite[Proposition 5.18]{Lee2013} and $\tilde{\mu}|_{\Var X'}$ is a global diffeomorphism between~$\Var{X}'$ and $\tilde{\mu}(\Var{X}')$.

Note that even though $s = 4 = \dim \calX$, this example does not contradict the sharpness result in the answer to \cref{Q9} because the considered coordinate projection is not generic.
\end{example}

The inverse problem \cref{eqn_ip} is well posed for generic $x \in \Var{X}$ and generic choice of linear projection $\mu : \calX \to \bbk^s$ with $s \geq d+1$ because of the answer to \cref{Q9}. We argued in \cref{sec:review} that computing a global Lipschitz constant for $\mu^{-1}$ is a difficult problem. Instead we proposed to compute the smallest local Lipschitz constant, that is, the condition number of $\mu^{-1}$.

\begin{question}\label{Q11}
How does one compute the condition number of the inverse problem \cref{eqn_ip} at a smooth point $x \in \Var{X}$?
\end{question}
\begin{answer}\label{A11}
The condition number with respect to the Euclidean metrics on both domain and codomain at a smooth point $x\in \Var{X}$ is $\kappa(x) = \frac{1}{ \sigma_{d}( \deriv{\mu}{x} ) }$, where $\sigma_d( \deriv{\mu}{x} )$ is the $d$th largest singular value of the derivative of $\mu : \calX \to \bbk^s$ at $x$.

More concretely, let $M \in \bbk^{s \times n}$ be the matrix representing the linear map $\hat{\mu} : \bbk^n \to \bbk^s$ in the ambient space. Let $Q \in \bbk^{n\times d}$ be a matrix whose columns form an orthonormal basis of the tangent space $\Tang{x}{\Var{X}}$. Then $\kappa(x) = \frac{1}{\sigma_d(MQ)}$.
\end{answer}

\begin{example}[Rank-$1$ matrix recovery, continued III]
Let $\calX$ be the variety $2 \times 3$ matrices of rank one.
The tangent space of $\Var X = \mathrm{im}(\phi)$ at a point $X= \phi(t)$ is spanned by the columns of the Jacobian matrix of $\phi$,
\[
J = \deriv{\phi}{t} = \begin{blockarray}{cccccc}
  & \partial_1 & \partial_2 & \partial_3 & \partial_4 & \partial_5 \\
  \begin{block}{c[ccccc]}
 \phi_{1,1} & t_3 & 0 & t_1 & 0 & 0 \\
 \phi_{1,2} & t_4 & 0 & 0 & t_1 & 0 \\
  \phi_{1,3} & t_5 & 0 & 0 & 0 & t_1 \\
 \phi_{2,1} & 0 & t_3 & t_2 & 0 & 0 \\
 \phi_{2,2} & 0 & t_4& 0 & t_2 & 0 \\
  \phi_{2,3} & 0 & t_5 & 0 & 0 & t_2 \\
  \end{block}
 \end{blockarray}.
\]
Using, for example, a rank-revealing $QR$-decomposition or the singular value decomposition of $J$, one determines a matrix $Q$ whose columns form an orthonormal basis of the column span of $J$. This is the matrix $Q$ required in \cref{A11}.

For instance, consider $X = \left[\begin{smallmatrix} 1 & 1 & 1 \\ 2 & 2 & 2 \end{smallmatrix}\right]$, and $t=(1,2,1,1,1) \in \phi^{-1}(X)$. Then, the matrix $Q$ can be computed numerically via a (truncated) singular value decomposition of $J$:
\[
 Q \approx
 \begin{bmatrix}
  -0.258199 & -0.329793 & -0.156748 &  0.516398 \\
  -0.258199 &  0.300644 & -0.207235 &  0.516398 \\
  -0.258199 &  0.029148 &  0.363983 &  0.516398 \\
  -0.516398 & -0.659586 & -0.313497 & -0.258199 \\
  -0.516398 &  0.601289 & -0.414469 & -0.258199 \\
  -0.516398 &  0.058297 &  0.727966 & -0.258199
 \end{bmatrix}.
\]

Let $M$ be the matrix representing the coordinate projection $\tilde{\mu}$ of \cref{example_recov_lr}.
Then, the condition number of the inverse problem at the (locally unique) solution $X$ is~$\sigma_4( M Q )^{-1} \approx 3.8730$.

On the other hand, the coordinate projection $\hat{\mu}$ of \cref{example_recov_lr} is not recoverable, or equivalently not locally identifiable. The fibers of $\hat{\mu}$ have positive dimension, so that from \cref{eqn_kappa} the condition number is infinite. This can be verified numerically. Composing $Q$ with the matrix of representing $\hat{\mu}$, we obtain a matrix whose singular values, rounded to two significant digits, are $1.0$, $1.0$, $0.82$, and $6.2 \cdot 10^{-17}$. The numerically computed condition number is thus $1.6 \cdot 10^{16}$.
\end{example}

The last question assumed that we are given as input a data point $y\in\mu(\Var{X})$. If the data point is $y\not\in\mu(\Var{X})$ and if one seeks to solve the least squares minimization problem $\min_{x\in \Var{X}}\tfrac{1}{2}\Vert \mu(x)-y\Vert^2$, then the condition number of this alternative problem is determined both by the condition number of $\mu^{-1}$ and the \emph{curvature} of $\mu(\Var{X})$ at $\mu(x)$; see \cite{BV2020} for more precise statements. For the case where $\Var{X}$ is the variety of low-rank matrices, the condition number of the associated least-squares problem was worked out in detail in \cite{BV2021}.

\section{The main technical tools: Classic algebraic geometry}\label{sec:AG}

In this section, we provide the relevant definitions and preliminaries from classic algebraic geometry, in the real and complex settings. We use them in Section~\ref{q_and_a_proofs} to prove the correctness of the answers given in Section \ref{q_and_a}. We refer to \cite{Harris1992,CLO2015} for a general introduction and to \cite{Eis:CommutativeAlgebra,GrifHar:PrinciplesAlgebraicGeometry} for a more precise account.

Let $\bbk = \bbR$ or $\bbk = \bbC$. An \textit{algebraic variety} is a subset $\Var{X}\subseteq \bbk^n$ defined as the vanishing set of a set of polynomials $f_1 \vvirg f_k$ in $n$ variables $x_1 \vvirg x_n$ with coefficients in $\bbk$. The \emph{ideal}
\[
I^{\bbk}(\Var{X}):= \{f\in\bbk[x_1,\dots,x_n]  :  f(x)=0\text{ for all } x \in \Var{X}\}
\]
is the set of polynomials vanishing on $\Var{X}$. Hilbert's basis theorem states that $I^\bbk(\Var{X})$ is finitely generated, say $I^\bbk(\Var{X}) = \langle g_1 \vvirg g_r \rangle$. Depending on whether $\bbk= \bbR$ or $\bbk = \bbC$, we say that~$\Var{X}$ is a real or complex algebraic variety, and that $I^\bbk(\Var{X})$ is its real or complex ideal.

A variety $\Var{X}$ is \textit{irreducible} if it cannot be expressed as the union of two proper subvarieties. Every variety $\Var{X}$ is an irredundant finite union of irreducible subvarieties, called \emph{irreducible components} of $\Var{X}$.
For an arbitrary subset $\calX \subseteq \bbk^n$, we say that $\calX$ is irreducible if its Zariski closure $\bar{\calX} \subseteq \bbk^n$ (see below) is irreducible.

A \emph{semi-algebraic} set in $\bbR^n$ is a finite union of subsets of algebraic varieties satisfying a set of (strict) polynomial inequalities. In particular, every algebraic variety is a semi-algebraic set whose set of defining inequalities is empty.

\subsection{Topology on algebraic varieties} \label{subsec: topology}
The \textit{Euclidean topology} of $\bbk^n$ induces a topology on every subset $S$ of $\bbk^n$. We refer to this subspace topology on $S$ as the Euclidean topology on $S$.
The set of subvarieties of a variety $\Var{X}$ is closed under taking arbitrary intersections and finite unions; hence, algebraic subvarieties of $\Var{X}$ define the closed subsets of a topology on $\Var{X}$. This is the \emph{Zariski topology} on $\calX$.

The Zariski topology is coarser than the Euclidean topology: if $\Var{X}$ is irreducible and $U$ is a Zariski open subset of $\Var{X}$, then $U$ is open and dense in $\Var{X}$ with respect to the Euclidean topology. The subset $\Var{X} \setminus U$ is a proper subvariety of $\Var{X}$ and it has measure zero with respect to the measure on $\Var{X}$ induced by the Lebesgue measure on $\bbk^n$. Hence, a Zariski open subset $U$ also has full measure on~$\Var{X}$.

Let $\Var{X}$ be a semi-algebraic set in $\bbR^n$ and let $\bar{\Var{X}}$ be its closure in the Zariski topology of $\bbR^n$. Let $\Var{X}^\circ$ be the Euclidean interior of $\Var{X}$ in $\bar{\Var{X}}$. Then $\Var{X}^\circ$ is semi-algebraic as well and $\bar{\Var{X}^\circ} = \bar{\Var{X}}$ \cite[Proposition 2.2.2]{BCR1998}.

It is common in algebraic geometry, as we have done in the foregoing sections, to state results \emph{generically} or in terms of \emph{generic points}.
\begin{definition}[Genericity]
Let $\Var{X} \subseteq \bbk^n$ be an irreducible algebraic variety. A property~$\calP$ holds \emph{generically on $\Var{X}$} if it holds for all points in a Zariski open subset $U \subseteq \Var{X}$. The points of~$U$ are called \emph{generic points} (with respect to the property~$\calP$). The set $\Sigma_\calP = \Var{X} \setminus U$ is a proper subvariety of $\Var{X}$, called the \emph{discriminant} of the property $\calP$.
\end{definition}
The discriminant $\Sigma_\calP$ of a property can be tedious to describe explicitly. Nevertheless, if a property $\calP$ holds generically, then the discriminant $\Sigma_\calP$ exists and it has Lebesgue measure zero.

The space of linear maps  $\bbk^n\to \bbk^s$ is a vector space isomorphic to $\bbk^{n s}$. Similarly, the space of polynomial maps of a fixed degree pattern forms a vector space. Thus, one can state results in terms of generic linear maps, or generic polynomial maps, meaning that the results hold for generic points in the corresponding vector space.

\subsection{Dimension and tangent space of an algebraic variety}\label{subsec: dimension and tangent}
Let $\Var{X} \subseteq \bbk^n$ be an irreducible variety with ideal $I^\bbk(\Var{X}) = \langle g_1 \vvirg g_r \rangle$. Then, there is a Zariski open subset $\Var{X}' \subseteq \Var{X}$ such that the rank of the Jacobian matrix of $g = (g_1 \vvirg g_r)$ is constant on $\Var{X}'$. This set $\Var{X}'$ is called the set of \emph{smooth} (or \emph{non-singular}) points of~$\Var{X}$. The dimension of $\Var{X}$ is defined as
\[
\dim \Var{X} = n - \rank( \deriv{g}{x} ) \quad \text{for any $x\in \Var{X}'$}.
\]
If $\bbk= \bbR$ (respectively, $\bbk = \bbC$), then $\Var{X}'$ is a smooth real (respectively, complex) embedded submanifold of $\bbk^n$ and its dimension as a real (respectively, complex) manifold is $\dim \Var{X}$.

For $x \in \calX'$, the linear subspace
\[
T_x \calX = \ker( \deriv{g}{x} )
\]
of $\bbk^n$ is the tangent space to $\calX$ at $x$. Note that $\dim T_x \calX  = \dim \calX$.

\subsection{Degree of a variety}\label{sec:degree}
Let $\mathcal X\subset \CC^n$ be a $d$-dimensional algebraic variety. A generic affine subspace of dimension $n-d$ intersects $\calX$ in finitely many points by Bertini's Theorem (see, e.g., \cite[Section~18]{Harris1992} or \cite[Theorem 0.5]{eisenbud-harris:16}). This number of points does not depend on the chosen subspace and is called the \emph{degree} of $\calX$.

The degree of a variety is a property of its embedding in $\CC^n$. An immediate consequence of Bezout's Theorem \cite[Theorem 7.7]{hartshorne2013algebraic} is the following, describing the degree of the image of a variety under the Veronese embedding $\nu_r$ from \cref{def-veronese}.
\begin{theorem}\label{thm:degVer}
Let $\mathcal X\subset \CC^n$ be an algebraic variety of dimension $d$ and degree~$\delta$. Then, $\nu_r(\mathcal X)$ is an algebraic variety of dimension $d$ and degree $r^{d}\cdot \delta$.
\end{theorem}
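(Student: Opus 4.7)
The plan is to establish the dimension and degree claims separately.

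For the dimension, observe that the Veronese map $\nu_r$ is injective on $\mathbb C^n$: one can recover the coordinates $x_i$ from suitable components of $\nu_r$ (either directly, if linear monomials appear among the components, or as ratios of appropriate degree-$r$ monomials). Hence $\nu_r|_{\mathcal X} : \mathcal X \to \nu_r(\mathcal X)$ is a bijective polynomial morphism with nowhere-vanishing differential, so $\nu_r(\mathcal X)$ is Zariski-closed (in particular, an algebraic variety) and $\dim \nu_r(\mathcal X) = \dim \mathcal X = d$.

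For the degree, I would use the characterization from \cref{sec:degree}: $\deg \nu_r(\mathcal X)$ is the number of points of $\nu_r(\mathcal X) \cap L$ for a generic affine subspace $L \subset \mathbb C^N$ of codimension $d$. Writing $L = H_1 \cap \cdots \cap H_d$ as the intersection of $d$ generic affine hyperplanes and using the injectivity of $\nu_r$,
\[
 |\nu_r(\mathcal X) \cap L| = |\mathcal X \cap \nu_r^{-1}(H_1) \cap \cdots \cap \nu_r^{-1}(H_d)|.
\]
Since each component of $\nu_r$ is a monomial of degree at most $r$, pulling back a generic affine linear form on $\mathbb C^N$ yields a polynomial of degree exactly $r$ on $\mathbb C^n$. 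Thus each $\nu_r^{-1}(H_i)$ is a generic hypersurface of degree $r$ in $\mathbb C^n$, and the statement reduces to showing that the intersection of $\mathcal X$ with $d$ generic hypersurfaces of degree $r$ consists of exactly $r^d \cdot \delta$ points. This is the content of Bezout's theorem iterated $d$ times: slicing a $k$-dimensional variety of degree $\delta'$ by a generic hypersurface of degree $r$ produces a $(k-1)$-dimensional variety of degree $r \cdot \delta'$. Starting from dimension $d$ and degree $\delta$ and iterating $d$ times yields $r^d \cdot \delta$ reduced points.

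The main obstacle will be justifying the iterated Bezout argument rigorously: at every intermediate stage one must verify that the generic hypersurface does not contain any component of the current intersection and meets it transversally, so that all intersection multiplicities equal one and the degree multiplies exactly by $r$. I would handle this by passing to the projective closure of $\mathcal X$ and of the hypersurfaces, at which point the claim becomes the standard statement that degree is multiplicative under intersection with generic hypersurfaces of fixed degree $r$, provable via the classical Bezout theorem together with Bertini's theorem on generic hyperplane sections (the same tool already invoked in \cref{sec:degree} to define $\deg \mathcal X$).
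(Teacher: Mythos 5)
Your proposal is correct and takes the same route the paper indicates: the paper presents the statement as ``an immediate consequence of Bezout's Theorem'' (Hartshorne I.7.7) without further elaboration, and your argument simply unpacks that claim by pulling back generic hyperplanes under $\nu_r$ to generic degree-$r$ hypersurfaces, iterating Bezout $d$ times, and invoking Bertini for the genericity and transversality needed at each step. The closedness and dimension of $\nu_r(\calX)$ follow, as you observe, from $\nu_r$ being a closed embedding (cf.\ \cref{lemma: veronese properties}), so there is no substantive gap.
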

The only irreducible varieties of degree one are affine subspaces.

\subsection{Complexification of real algebraic varieties}\label{subsec: complexification}
Every real variety defines a complex variety via a process called complexification. Let $\Var{X}\subset \bbR^n$ be a real algebraic variety. The \emph{complexification} $\Var{X}^\mathbb{C}$ of $\Var{X}$ is
\[
\Var{X}^\mathbb{C}:=\{x\in \mathbb C^n  :  f(x) = 0 \text{ for all } f \in I^{\mathbb R}(\Var{X})\}.
\]
There are classic results relating the algebraic and geometric properties of a real algebraic variety to the ones of its complexification.

The following is a rephrasing of \cite[Lemmata 6 and 8]{whitney}:
\begin{theorem}\label{thm: withney lemma 6 and 8}
Let $\Var{X}\subseteq \mathbb R^n$ be a real algebraic variety and let $\Var{X}^{\mathbb C}$ be its complexification. Regard $\bbR^n$ as a (real) subspace of $\bbC^n$. Then,
\begin{enumerate}
 \item $\Var{X}^\bbC$ is the closure in the (complex) Zariski topology of $\Var{X}$;
 \item the set of real polynomials in $I^\bbC(\Var{X}^{\mathbb C})$ is $I^{\bbR}(\Var{X})$;
 \item the real dimension of $\Var{X}$ equals the complex dimension of $\Var{X}^\bbC$.
\end{enumerate}
\end{theorem}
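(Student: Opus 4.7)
The plan is to dispatch parts (1) and (2) first by unpacking definitions, then leverage them to prove part (3), which carries the main content. Both inclusions in (2) will be immediate: any $f\in I^\bbR(\Var{X})$ vanishes on $\Var{X}^\bbC$ by construction, and any real $f\in I^\bbC(\Var{X}^\bbC)$ restricts to vanish on the real points $\Var{X}$. For (1), the containment $\overline{\Var{X}}^{\mathrm{Zar}}\subseteq \Var{X}^\bbC$ is clear because $\Var{X}^\bbC$ is complex-Zariski closed and contains $\Var{X}$. For the reverse I would take any $h\in I^\bbC(\Var{X})$ and decompose $h=h_1+ih_2$ with $h_1,h_2\in\bbR[x_1,\dots,x_n]$; evaluating at a real point $x\in\Var{X}$, the equality $h_1(x)+ih_2(x)=0$ in $\bbC$ forces $h_1(x)=h_2(x)=0$, so $h_1,h_2\in I^\bbR(\Var{X})$ and hence $h$ vanishes on $\Var{X}^\bbC$ by definition of the complexification.

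The argument for (3) would proceed in two stages. First I would upgrade (2) to the ideal equality $I^\bbC(\Var{X}^\bbC)=\bbC\cdot I^\bbR(\Var{X})$, exploiting that $\Var{X}^\bbC$ is invariant under conjugation $z\mapsto\bar z$ since it is cut out by real polynomials. Given $h\in I^\bbC(\Var{X}^\bbC)$ with $h=h_1+ih_2$, the polynomial $\bar h:=h_1-ih_2$ satisfies $\bar h(z)=\overline{h(\bar z)}$; combined with conjugation invariance this yields $\bar h\in I^\bbC(\Var{X}^\bbC)$, so both $2h_1=h+\bar h$ and $2ih_2=h-\bar h$ lie in $I^\bbC(\Var{X}^\bbC)$, whence $h_1,h_2\in I^\bbR(\Var{X})$ by (2). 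Consequently, any real generating set $g=(g_1,\dots,g_r)$ of $I^\bbR(\Var{X})$ also generates $I^\bbC(\Var{X}^\bbC)$ over $\bbC$, so one and the same Jacobian $\deriv{g}{\cdot}$ defines both dimensions. Writing $\dim_\bbR\Var{X}=n-\rho_\bbR$ and $\dim_\bbC\Var{X}^\bbC=n-\rho_\bbC$ for the generic Jacobian ranks, the inclusion $\Var{X}\subseteq\Var{X}^\bbC$ gives $\rho_\bbR\le\rho_\bbC$. For the reverse, I would use part (1) and the standing irreducibility of $\Var{X}$ to conclude that $\Var{X}$ is Zariski dense in $\Var{X}^\bbC$, so the Zariski open subset $U\subseteq\Var{X}^\bbC$ where the Jacobian rank equals $\rho_\bbC$ meets $\Var{X}$; at any $x\in\Var{X}\cap U$, the matrix $\deriv{g}{x}$ is real with complex rank $\rho_\bbC$, hence real rank $\rho_\bbC$, giving $\rho_\bbR\ge\rho_\bbC$.

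The hard step will be the promotion of (2) to the ideal-theoretic equality $I^\bbC(\Var{X}^\bbC)=\bbC\cdot I^\bbR(\Var{X})$: without it, additional genuinely complex generators of $I^\bbC(\Var{X}^\bbC)$ could in principle force the generic complex Jacobian rank above $\rho_\bbR$, and a more delicate argument would be needed to compare the two dimensions. The conjugation-invariance trick handles this in one line and reduces the remainder of (3) to the Zariski density from (1) together with the elementary fact that a real matrix has equal real and complex rank.
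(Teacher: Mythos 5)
The paper supplies no argument for this statement; it simply cites Lemmata~6 and~8 of Whitney's 1957 paper. Your proposal therefore offers something the paper does not: a self-contained proof. Your treatment of (1) and (2) is correct — the real/imaginary decomposition $h = h_1 + ih_2$ reduces complex vanishing on real points to $h_1, h_2 \in I^\bbR(\Var{X})$, which immediately gives both inclusions. The conjugation trick promoting (2) to the ideal identity $I^\bbC(\Var{X}^\bbC) = I^\bbR(\Var{X}) + iI^\bbR(\Var{X})$ is exactly the right tool, and the observation that a real matrix has the same rank over $\bbR$ and over $\bbC$ cleanly closes part (3).

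There is one small point to tighten. Your argument for (3) invokes the paper's characterization of dimension as $n - \rank(\deriv{g}{x})$ at a smooth point, and that characterization is stated (in Section~\ref{subsec: dimension and tangent}) only for \emph{irreducible} varieties. You assume $\Var{X}$ irreducible, but you never verify that $\Var{X}^\bbC$ is irreducible — and if it were reducible with components of unequal dimensions, the generic Jacobian rank would compute the \emph{smallest} component dimension rather than $\dim\Var{X}^\bbC$. You actually invoke irreducibility of $\Var{X}$ only to conclude that $\Var{X}$ is dense in $\Var{X}^\bbC$, but density is already immediate from part (1) and requires no irreducibility at all; the place where irreducibility is genuinely needed is the applicability of the Jacobian-rank dimension formula to $\Var{X}^\bbC$, and that step is silently assumed. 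Fortunately this is easy to repair: if $\Var{X}^\bbC = Y_1 \cup Y_2$ with each $Y_i$ a proper Zariski closed subset, then by part (1) neither $Y_i$ can contain $\Var{X}$ (else its closure would be all of $\Var{X}^\bbC$), so $\Var{X} = (\Var{X}\cap Y_1)\cup(\Var{X}\cap Y_2)$ exhibits $\Var{X}$ as a union of two proper real subvarieties, contradicting its irreducibility. Adding this observation, and relocating the role of irreducibility from the density step to this one, makes the proof complete.
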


\Cref{thm: withney lemma 6 and 8} implies that a generic point in a real algebraic variety $\Var{X}$ is also a generic point of the complexification $\Var{X}^{\bbC}$. More precisely, let $\Var{X}$ be a real variety, and let $U \subseteq \Var{X}^\bbC$ be the Zariski open subset on which a certain property $\calP$ holds. Then $U \cap \Var{X}$ is non-empty and Zariski open in $\Var{X}$.  In particular, Theorem \ref{thm: withney lemma 6 and 8} guarantees that results in complex algebraic geometry that hold for generic elements of a complex algebraic variety can be interpreted in the real setting.

\subsection{Maps between varieties}
For two varieties $\Var{X} \subseteq \bbk^\ell$ and $\Var{Y} \subseteq \bbk^n$, let
\begin{align*}
 \phi : \Var{X} \to \Var{Y}
\end{align*}
be a map and write $\phi(t) = (\phi_1(t) \vvirg \phi_n(t))$. We say that $\phi$ is a \emph{regular map} or a \emph{morphism of varieties} if, for every~$j=1 \vvirg n$, $\phi_j$ is (the restriction to $\Var{X}$ of) a polynomial in $\ell$ variables $t_1 \vvirg t_\ell$, which can be regarded as coordinates on $\bbk^\ell$.

The image of a morphism of varieties is not necessarily a variety. However, the following fundamental results characterize images of algebraic varieties in the complex and in the real case respectively.

\begin{theorem}[Chevalley's Theorem]\label{chevalley}
Let $\Var{X} \subseteq \bbC^\ell$ be a variety and let $\phi: \Var{X} \to \bbC^n$ be a morphism of varieties. Then $\Var{Y} = \phi(\Var{X})$ is locally closed in the Zariski topology. Hence, $\Var{Y}$ contains a Zariski open subset of its Zariski closure. Moreover, the closures of $\Var{Y}$ in the Zariski and the Euclidean topologies coincide.
\end{theorem}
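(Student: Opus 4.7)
The plan is to establish the two assertions in sequence: first, that $\Var{Y} = \phi(\Var{X})$ contains a Zariski open subset of its Zariski closure $\Var{Z} := \bar{\Var{Y}}$; and second, that $\Var{Z}$ also equals the Euclidean closure of $\Var{Y}$. Throughout I use the standing irreducibility hypothesis on $\Var{X}$, which forces both $\Var{Y}$ and $\Var{Z}$ to be irreducible as well. After replacing the codomain $\bbC^n$ with $\Var{Z}$, I may assume from the outset that $\phi : \Var{X} \to \Var{Z}$ is \emph{dominant}, i.e.\ its image is Zariski dense.

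For the first assertion I would translate the problem to coordinate rings. The induced $\bbC$-algebra homomorphism $\phi^\ast : A \to B$ with $A := \bbC[\Var{Z}]$ and $B := \bbC[\Var{X}]$ is injective precisely because $\phi$ is dominant, so I view $A \subseteq B$ as an inclusion of integral domains. The central step is a relative version of Noether normalization applied to this extension, in the spirit of \refthm{thm:Noether Norm}: there exist a non-zero $f \in A$ and elements $z_1,\dots,z_r \in B$ that are algebraically independent over $A$ such that the localization $B_f$ is a finite (hence integral) extension of $A_f[z_1,\dots,z_r]$. Geometrically, over the principal Zariski open subset $D(f) \subseteq \Var{Z}$ this realizes $\phi$ as the composition
\[
\phi^{-1}(D(f)) \;\xrightarrow{\;\pi\;}\; D(f) \times \bbC^r \;\xrightarrow{\;\mathrm{pr}_1\;}\; D(f),
\]
in which $\pi$ is a finite morphism (hence surjective onto its image) and the first-coordinate projection $\mathrm{pr}_1$ is surjective. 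Combining these, every $y \in D(f)$ lies in $\Var{Y}$, so $D(f) \subseteq \Var{Y} \subseteq \Var{Z}$; this is the desired Zariski open subset of the closure. The main obstacle I anticipate is carrying out this relative Noether normalization rigorously: \refthm{thm:Noether Norm} as used in the rest of the paper is stated for finitely generated $\bbC$-algebras, whereas the version needed here is for the finitely generated ring extension $A \subseteq B$ of integral domains, and locating the element $f$ requires an additional generic freeness argument.

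For the Zariski--Euclidean comparison, one inclusion is automatic: the Zariski topology is coarser than the Euclidean one, so every Zariski closed set is also Euclidean closed, and consequently $\bar{\Var{Y}}^{\mathrm{Eucl}} \subseteq \Var{Z}$. For the reverse inclusion, the first part produces a non-empty Zariski open subset $D(f)$ of the irreducible complex variety $\Var{Z}$ with $D(f) \subseteq \Var{Y}$; by the density statement recorded in \cref{subsec: topology}, any non-empty Zariski open subset of an irreducible complex variety is Euclidean dense in it. Therefore $\bar{\Var{Y}}^{\mathrm{Eucl}} \supseteq \bar{D(f)}^{\mathrm{Eucl}} = \Var{Z}$, yielding equality of the two closures and completing the proof plan.
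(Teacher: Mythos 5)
The paper's ``proof'' of this theorem is a single citation to Micha{\l}ek--Sturmfels, so there is no internal argument to compare against; your sketch is genuinely supplying what the paper elides. Your route --- reduce to a dominant morphism $\phi : \Var{X} \to \Var{Z}$ with $\Var{Z} = \bar{\Var{Y}}$, pass to the injective extension of coordinate rings $A = \bbC[\Var{Z}] \hookrightarrow B = \bbC[\Var{X}]$, apply a relative Noether normalization to get $B_f$ finite over $A_f[z_1,\dots,z_r]$, and then combine the lying-over property of finite injective extensions with the surjectivity of the projection $D(f)\times\bbC^r \to D(f)$ --- is a standard and correct way to obtain the ``dense open subset of the closure'' conclusion, and the Euclidean-density step is sound over $\bbC$ because the complement of $D(f)$ in the irreducible $\Var{Z}$ is a proper closed subvariety, hence Euclidean nowhere dense. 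As you flag, the decisive technical step is a relative Noether normalization over the domain $A$ (equivalently, generic freeness plus Noether normalization of $B \otimes_A \operatorname{Frac}(A)$ over $\operatorname{Frac}(A)$, followed by clearing denominators to locate $f$), which is strictly more than the absolute version the paper records as \refthm{thm:Noether Norm}; that gap is real but fillable by standard commutative algebra.

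One substantive discrepancy with the theorem as stated: you prove that $\Var{Y}$ \emph{contains} a nonempty Zariski open subset of $\bar{\Var{Y}}$, not that $\Var{Y}$ is \emph{locally closed}. Local closedness in fact fails in general, even for irreducible source: $\phi : \bbC^2 \to \bbC^2$, $(x,y) \mapsto (x,xy)$, has image $\{(a,b) : a \ne 0\} \cup \{(0,0)\}$, which is constructible but not locally closed. Chevalley's theorem --- including the cited Theorem 4.19 of Micha{\l}ek--Sturmfels --- gives \emph{constructibility} of the image, not local closedness; for irreducible $\Var{X}$ constructibility yields exactly the ``dense open in the closure'' consequence that the rest of the paper actually uses, and that is precisely what your argument establishes. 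So your proof proves the correct statement; the theorem's wording ``locally closed'' overstates the conclusion.
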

\begin{proof}
 We refer to \cite[Theorem 4.19]{MichSturm:InvitationNonlinearAlgebra}.
\end{proof}

In the real setting, the analog result is the Tarski--Seidenberg Theorem.

\begin{theorem}[Tarski--Seidenberg Theorem]
Let $\Var{X} \subseteq \bbR^\ell$ be a semi-algebraic set and let $\phi : \bar{\Var{X}} \to \bbR^n$ be a morphism of varieties, where $\overline{\Var{X}}$ denotes the Zariski closure of $\Var{X}$ in $\bbR^\ell$. Then $\phi(\Var{X})$ is a semi-algebraic subset of $\bbR^n$.
\end{theorem}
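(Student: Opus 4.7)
The plan is to reduce the statement to the classical projection form of the Tarski--Seidenberg theorem, which asserts that the image of a semi-algebraic set under a linear coordinate projection is again semi-algebraic, and then invoke that projection theorem as a black box. The reduction is accomplished by the standard graph construction.

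Concretely, first I would consider the graph
\[
\Gamma_\phi = \{(x,y)\in \bar{\Var{X}}\times \bbR^n : y = \phi(x)\} \cap (\Var{X}\times \bbR^n) \subseteq \bbR^{\ell+n}.
\]
Since $\Var{X}\subseteq\bbR^\ell$ is semi-algebraic by hypothesis, $\Var{X}\times \bbR^n$ is semi-algebraic in $\bbR^{\ell+n}$. The further conditions $y_j - \phi_j(x) = 0$ for $j = 1\vvirg n$ are polynomial equalities on $\bbR^{\ell+n}$ (here we use that $\phi$ is a morphism of varieties, hence polynomial), so intersecting with these cuts out another semi-algebraic subset. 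Thus $\Gamma_\phi$ is semi-algebraic. Next, one observes the obvious identity
\[
\phi(\Var{X}) = \pi(\Gamma_\phi),
\]
where $\pi:\bbR^{\ell+n}\to \bbR^n$ is the linear projection onto the last $n$ coordinates. It therefore suffices to know that projections of semi-algebraic sets along coordinate directions are semi-algebraic, and one reaches this conclusion by iterating the one-coordinate version $\ell$ times.

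The substantive content of the argument — and hence the main obstacle — is the one-coordinate projection theorem itself: if $S \subseteq \bbR^{m+1}$ is semi-algebraic, then its projection onto the first $m$ coordinates is semi-algebraic. This requires quantifier elimination in the first-order theory of real closed fields. The classical proof proceeds by writing $S$ as a finite Boolean combination of sign conditions on finitely many polynomials $p_1(x,t)\vvirg p_N(x,t) \in \bbR[x_1\vvirg x_m,t]$, and then showing that the existence of a real $t$ realising a prescribed sign pattern can itself be expressed as a finite Boolean combination of sign conditions on the coefficients of the $p_i$ in $t$ — these coefficients being polynomials in $x_1\vvirg x_m$. The bookkeeping is done via Sturm--Tarski sequences (or, equivalently, Thom's lemma and subresultant coefficients), and is carried out in detail in \cite[Chapter~2]{BCR1998}, which is the reference I would cite rather than reproving.

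I would not attempt to reprove the one-coordinate projection theorem here; the proof plan above cleanly isolates exactly the statement that has to be imported from \cite{BCR1998}, and the rest — the graph trick and the iteration on coordinates — is a short formal manipulation that can be written in a few lines.
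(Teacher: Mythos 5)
Your argument is correct and matches the paper's approach, which simply cites \cite[Section 2.2]{BCR1998} for the result; you add the standard graph-and-projection reduction before deferring to the same reference for the one-variable quantifier-elimination step. Since the paper's proof is itself just a citation, your slightly more explicit version is fully consistent with it.
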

\begin{proof}
 We refer to \cite[Section 2.2]{BCR1998}.
\end{proof}

In the complex case, the dimension of the (Zariski closure of the) image of an algebraic variety is controlled by the following result.
\begin{theorem}[Dimension of the fibers]\label{thm: theorem of dimension of the fibers}
 Let $\Var{X},\Var{Y}$ be complex irreducible varieties and let $\phi: \Var{X} \to \Var{Y}$ be a morphism such that $\phi(\Var{X})$ is (Zariski) dense in $\Var{Y}$. Then for every $y \in \phi(\Var{X})$,
 \[
\dim \phi^{-1}(y) \geq   \dim \Var{X} - \dim \Var{Y},
 \]
and there is a Zariski open subset of $\Var{Y}$ for which equality holds.
\end{theorem}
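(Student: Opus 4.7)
The plan is to prove the two assertions separately: first the generic equality $\dim \phi^{-1}(y) = d-e$ (where $d = \dim \Var{X}$ and $e = \dim \Var{Y}$) on a Zariski open subset of $\Var{Y}$, and then the pointwise lower bound $\dim \phi^{-1}(y) \geq d-e$ at every $y \in \phi(\Var{X})$. These two steps use quite different technology, so separating them is essential.

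For the generic equality, I would exploit the induced embedding of function fields $K(\Var{Y}) \hookrightarrow K(\Var{X})$ coming from the dominance of $\phi$. Because $\Var{X}$ and $\Var{Y}$ are irreducible of dimensions $d$ and $e$, the field extension $K(\Var{Y}) \subseteq K(\Var{X})$ has transcendence degree $d-e$. Applying the Noether Normalization Lemma (cited earlier as \cref{thm:Noether Norm}) to this extension, I can choose regular functions $t_1, \ldots, t_{d-e}$ on a suitable affine open subset of $\Var{X}$ that are algebraically independent over $K(\Var{Y})$ and over which $K(\Var{X})$ is finite. After spreading this out to a Zariski open $V \subseteq \Var{Y}$ and replacing $\Var{X}$ by $\phi^{-1}(V)$, the morphism $\psi := (\phi, t_1, \ldots, t_{d-e}) : \phi^{-1}(V) \to V \times \bbC^{d-e}$ becomes finite onto its image. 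Finite maps have zero-dimensional fibers, so for $y \in V$ the fiber $\phi^{-1}(y)$ admits a finite surjection onto a nonempty open subset of $\bbC^{d-e}$, forcing $\dim \phi^{-1}(y) = d-e$.

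For the pointwise lower bound, I would use Krull's Hauptidealsatz (principal ideal theorem), which is the standard tool for upper semicontinuity of fiber dimension. Working in a sufficiently small affine neighborhood $W \subseteq \Var{Y}$ of $y$, the local dimension at $y$ is $e = \dim \Var{Y}$, so by the height/dimension theory of Noetherian rings the maximal ideal of $y$ in $\calO_{\Var{Y},y}$ is generated up to radical by exactly $e$ regular functions $f_1, \ldots, f_e$ on $W$. Pulling these back, the fiber $\phi^{-1}(y) \cap \phi^{-1}(W)$ is cut out (set-theoretically) in $\phi^{-1}(W)$ by the $e$ equations $\phi^* f_1 = \cdots = \phi^* f_e = 0$. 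By iterating Krull's Hauptidealsatz, passing from $\phi^{-1}(W)$ to the zero set of these $e$ functions can drop the local dimension at each point by at most $e$, so every irreducible component of $\phi^{-1}(y)$ passing through any $x \in \phi^{-1}(y)$ has dimension at least $\dim_x \phi^{-1}(W) - e = d - e$.

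The main technical obstacle is the pointwise lower bound, since it needs to be uniform across all $y$ including singular points of $\Var{Y}$ and points where the fiber jumps in dimension. The delicate ingredient is that at any point $y$ of an $e$-dimensional irreducible variety (singular or not), the maximal ideal is set-theoretically cut out by $e$ regular functions in some affine neighborhood. This follows from Krull's height theorem, but it is the nontrivial commutative-algebraic fact that converts the set-theoretic Noether-normalization picture of the generic step into a uniform pointwise bound. Once this is in hand, the assembly via iterated Krull is routine.
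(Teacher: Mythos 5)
The paper does not actually prove this theorem: it only records a one-sentence gloss (``rank conditions of the differential'') and defers to \cite[Theorem~11.22]{Harris1992} and \cite[Theorem~1.25]{Shafarevich2013}. Your write-up therefore supplies a genuine proof rather than reproducing the paper's, and it is essentially the Shafarevich argument: Noether normalization over the generic point of $\Var{Y}$, spread out over an affine open $V \subseteq \Var{Y}$, to get the generic upper bound, and a system-of-parameters argument via Krull's principal ideal theorem for the pointwise lower bound. Both halves are correct, and this is one of the two standard routes.

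Two of the places you label ``routine'' do hide real content. In the spreading-out step you need the finiteness to hold over a localization of the coordinate ring of $\Var{Y}$ (not of $\Var{X}$); this works because, on an affine chart, the finitely many algebra generators of $\Var{X}$ over $\Var{Y}$ satisfy monic polynomials over $K(\Var{Y})[t_1,\ldots,t_{d-e}]$ after clearing denominators through a single $b$ in the coordinate ring of $\Var{Y}$, but if $\Var{X}$ is not affine you must additionally run a Noetherian induction over a finite affine cover, since the argument only controls the fibers meeting one chart. In the pointwise step, after choosing a system of parameters $f_1,\ldots,f_e$ for $\calO_{\Var{Y},y}$ you must further shrink the affine neighborhood $W$ so that the set-theoretic zero locus of $f_1,\ldots,f_e$ in $W$ is exactly $\{y\}$; only then is the fiber over $y$ (locally near a chosen $x$) cut out by $e$ equations and the iterated Hauptidealsatz applicable. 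Once these are spelled out the proof is complete.

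On the comparison of approaches: the differential-rank argument the paper's gloss alludes to deduces the generic statement from generic smoothness (the locus where $d\phi$ attains rank $\dim\Var{Y}$ is a dense open), which is a characteristic-zero phenomenon, and it still needs something like the Hauptidealsatz to get the pointwise inequality at singular or non-generic fibers. Your commutative-algebra route works over any field, gives both halves by uniform technology, and is arguably more elementary; the differential route is geometrically more transparent and matches the way the paper actually uses the theorem elsewhere (via Jacobian ranks of $\phi$). Both buy the same statement.
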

\begin{proof}
The proof relies on rank conditions of the differential of the map $\phi$. We refer to \cite[Theorem 11.22]{Harris1992} and \cite[Theorem 1.25]{Shafarevich2013} for a complete proof.
\end{proof}

Given a morphism of varieties $\phi: \bbk^\ell \to \bbk^n$, we say that $\phi$ is a \emph{parameterization} of $\im(\phi) \subseteq \bbk^n$. This is the setting (ES). In this case, for every $t \in \bbk^\ell$, one has a \emph{Jacobian map} $\deriv{\phi}{t} : \bbk^\ell \to \bbk^n$ defined by the differential of $\phi$ at $t$, that is
\begin{align*}
 \deriv{\phi}{t} : \bbk^\ell \to \bbk^n,\quad
 v \mapsto \left[ \frac{\partial}{\partial t_j} \phi_i (t) \right]_{\substack{i  = 1 \vvirg n\\ j = 1 \vvirg \ell}} \cdot v,
\end{align*}
where $\frac{\partial}{\partial t_j} \phi_i(t)$ denotes the partial derivative of the polynomial $\phi_i$ with respect to the variable $t_j$, evaluated at $t$. For $r \geq 0$, the set $V^\phi_r = \{ t \in \bbk^\ell  :  \rank ( \deriv{\phi}{t} ) \leq r\}$ is Zariski closed, and its defining equations are given by the $(r+1) \times (r+1)$ minors of the Jacobian matrix, regarded as polynomials in the coordinates $t_1 \vvirg t_\ell$ of~$\bbk^\ell$. Let $r_0 = \min\{r : V_r^\phi = \bbk^\ell\}$. Then $r_0$ is called the generic rank of $\phi$ and it is attained on the Zariski open subset $V_{r_0}^\phi \setminus V_{r_0-1}^\phi$.

In this setting, we provide the following theorem.
\begin{theorem}\label{thm: dimension image of a map}
Let $\phi:\bbk^\ell \to \bbk^n$ be an algebraic parameterization having generic rank $r_0$. Let $\Var{X}= \bar{\phi(\bbk^\ell)}$ be the closure in the Zariski topology of the image of $\phi$. Then $\Var{X}$ is an algebraic variety of dimension $r_0$.

Moreover, if $x = \phi(t)$ is a smooth point of $\calX$ with $t \in V_{r_0}^\phi$, then $T_x \calX = \im ( \deriv{\phi}{t} )$.
\end{theorem}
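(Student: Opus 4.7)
The overall strategy is to view $\phi$ as a dominant morphism $\phi : \bbk^\ell \to \calX$, and then combine the theorem on dimensions of fibers (\cref{thm: theorem of dimension of the fibers}) with a rank-nullity argument on the differential. The moreover part then drops out by a dimension count.

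First I would verify that $\calX$ is an irreducible algebraic variety. By Chevalley's Theorem (\cref{chevalley}), $\phi(\bbk^\ell)$ is constructible in $\bbk^n$ and contains a Zariski open subset of its Zariski closure $\calX$; hence $\calX$ is an algebraic variety. Since $\bbk^\ell$ is irreducible and the Zariski-closure of the image of an irreducible set is irreducible, $\calX$ is irreducible. In the real case I would complexify first, noting that by \cref{thm: withney lemma 6 and 8} the real and complex dimensions agree and that real-generic points are complex-generic, so it suffices to work over $\bbk=\bbC$.

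Next I would establish the two inequalities that together give $\dim\calX = r_0$. Fix $t$ in the Zariski open set $V_{r_0}^{\phi}\setminus V_{r_0-1}^{\phi}$, chosen further so that $\phi(t)$ is a smooth point of $\calX$ and lies in the open locus where the fiber-dimension formula holds (the intersection of these open conditions is still Zariski open and non-empty). For the inequality $r_0 \le \dim\calX$: since every curve in $\bbk^\ell$ through $t$ is mapped by $\phi$ into $\calX$, differentiating yields $\im(\deriv{\phi}{t}) \subseteq T_{\phi(t)}\calX$, and taking dimensions gives $r_0 \le \dim T_{\phi(t)}\calX = \dim\calX$. For the reverse inequality $\dim\calX \le r_0$: by the constant-rank theorem applied locally at $t$, the fiber $\phi^{-1}(\phi(t))$ is locally a smooth submanifold of dimension $\ell - r_0$, so $\dim \phi^{-1}(\phi(t)) \ge \ell - r_0$. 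On the other hand, \cref{thm: theorem of dimension of the fibers} applied to the dominant morphism $\phi:\bbk^\ell \to \calX$ says that for a generic $x\in\calX$ the fiber has dimension exactly $\ell - \dim\calX$. Combining these at our chosen $t$ gives $\ell - r_0 \le \ell - \dim\calX$, hence $\dim\calX \le r_0$.

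Finally, the moreover statement is immediate: at a smooth point $x = \phi(t)$ with $t$ of maximal rank $r_0$, the inclusion $\im(\deriv{\phi}{t}) \subseteq T_x\calX$ holds by the tangent-of-a-curve argument, both sides have dimension $r_0 = \dim\calX$, and so they are equal. The main technical obstacle I foresee is justifying the fiber-dimension step in the real-algebraic category, since the Zariski topology over $\bbR$ is weaker and \cref{thm: theorem of dimension of the fibers} is stated over $\bbC$; I would resolve this by passing to complexifications, invoking \cref{thm: withney lemma 6 and 8} to transfer the conclusion back, and noting that the polynomial $(r+1)\times(r+1)$ minors defining $V_r^\phi$ are insensitive to whether coefficients are viewed over $\bbR$ or $\bbC$, so the generic rank $r_0$ is unchanged.
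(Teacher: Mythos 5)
Your proof is correct and follows essentially the same route as the paper: reduce the real case to the complex case via complexification (\cref{thm: withney lemma 6 and 8}), then invoke \cref{thm: theorem of dimension of the fibers} over $\bbC$. You additionally fill in the two-inequality dimension argument that the paper compresses into the one-line remark ``this is guaranteed by \cref{thm: theorem of dimension of the fibers}'', and your dimension-count proof of the tangent-space claim ($\im(\deriv{\phi}{t})\subseteq T_x\calX$ and both sides have dimension $r_0$) is in fact a bit cleaner than the paper's brief appeal to local surjectivity of $\phi$, which on its own does not imply surjectivity of the differential without the constant-rank consideration you supply.
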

\begin{proof}
Restricting to smooth points of the image, for $\bbk=\RR$, the last statement follows from the Global Rank Theorem \cite[Theorem 4.14 a]{Lee2013}. As complex manifolds are in particular real manifolds the last statement also holds for $\bbk=\CC$. 

The statement about the dimension of $\Var{X}$ holds for a smooth point $x = \phi(t)$ of $\calX$ with $t \in V_{r_0}^\phi$ because
\[\dim\Var{X}=\dim T_x \calX =\dim \im ( \deriv{\phi}{t} )=r_0.\]
\end{proof}

Note that one can always choose generic points in the image of a parameterization like (ES). More precisely, if $\phi: \bbk^\ell \to \bbk^n$ is a polynomial parametrization of a subset of a variety $\Var{X} = \bar{\im (\phi)}$, and $\calP$ is a property that holds generically on $\Var{X}$, then the property $\calP$ also holds almost everywhere on the model $\im(\phi)$. This allows us to study the generic properties of the model $\im(\phi)$ through the closed algebraic variety $\Var{X}$.

A special role is played by the Veronese embedding $\nu_r : \bbk^n \to \bbk^{N(n,r)}$ where $N(n,r) = \binom{n+r}{r}$, which is
\begin{equation}
\begin{aligned}\label{def-veronese}
 \nu_r : \bbk^n &\to \bbk^{N(n,r)},\\
  \bfx & \mapsto (\bfx^\alpha:  \alpha \in \bbN^n, \vert \alpha \vert \leq r);
\end{aligned}
\end{equation}
that is, the parameterization defined by all the monomials of degree at most $r$ in the coordinates $\bfx= (x_1 \vvirg x_n)$ of $\bbk^n$. The properties of the Veronese embedding are summarized in the next immediate result.
\begin{lemma}\label{lemma: veronese properties}
 The map $\nu_r: \bbk^n \to \bbk^{N(n,r)}$ is an embedding. That is, $\nu_r$ is injective and its differential is injective at every point $x \in \bbk^n$. Moreover, if $f : \bbk^n \to \bbk$ is a polynomial of degree $r$, then there exists a linear map $L_f : \bbk^{N(n,r)} \to \bbk$ such that we have $f = L_f \circ \nu_r$.
\end{lemma}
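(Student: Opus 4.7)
The plan is to verify each of the three claims directly from the definition \cref{def-veronese}, exploiting the fact that among the coordinates of $\nu_r(\mathbf{x})$ one finds the $n$ linear monomials $x_1 \vvirg x_n$ together with the constant monomial $1$.

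\textbf{Injectivity.} First I would observe that for every multi-index $\alpha$ with $|\alpha| = 1$, the component $\mathbf{x}^\alpha$ of $\nu_r(\mathbf{x})$ equals exactly one of the coordinates $x_1 \vvirg x_n$. Hence if $\nu_r(\mathbf{x}) = \nu_r(\mathbf{y})$, comparing these specific components yields $x_i = y_i$ for $i = 1 \vvirg n$, i.e.\ $\mathbf{x} = \mathbf{y}$.

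\textbf{Injectivity of the differential.} I would write out the Jacobian matrix $\deriv{\nu_r}{x} \in \bbk^{N(n,r) \times n}$ and isolate the $n$ rows corresponding to the multi-indices $\alpha = e_1 \vvirg e_n$ (where $e_i$ is the $i$-th standard basis vector of $\bbN^n$). These rows form the $n \times n$ identity matrix, since $\tfrac{\partial}{\partial x_j} x_i = \delta_{ij}$. Consequently $\deriv{\nu_r}{x}$ has rank $n$ for every $x \in \bbk^n$, so its associated linear map $\bbk^n \to \bbk^{N(n,r)}$ is injective.

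\textbf{Linearization of polynomials.} For the final statement, I would expand $f$ in the monomial basis as $f(\mathbf{x}) = \sum_{|\alpha|\le r} c_\alpha \mathbf{x}^\alpha$, which is possible precisely because $\deg f \le r$. Defining the linear functional
\[
 L_f : \bbk^{N(n,r)} \to \bbk, \quad (v_\alpha)_{|\alpha| \le r} \mapsto \sum_{|\alpha| \le r} c_\alpha v_\alpha,
\]
one sees immediately that $(L_f \circ \nu_r)(\mathbf{x}) = \sum_{|\alpha| \le r} c_\alpha \mathbf{x}^\alpha = f(\mathbf{x})$.

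No step here is a substantial obstacle: all three claims follow from matching coordinates of $\nu_r(\mathbf{x})$ with linear data on $\bbk^n$. The only care needed is to keep straight that the indexing set of $\nu_r$ includes multi-indices of every degree from $0$ to $r$, so that the linear coordinates $x_1 \vvirg x_n$ genuinely appear among the components and the expansion of an arbitrary $f$ of degree at most $r$ hits only indices that are actually present in $\nu_r$.
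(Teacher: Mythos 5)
Your proof is correct. The paper offers no proof of this lemma—it flags it as an ``immediate result''—and your argument is exactly the straightforward verification one would expect: the degree-one monomials $x_1,\dots,x_n$ appear among the coordinates of $\nu_r$, which gives injectivity and (by isolating the corresponding rows of the Jacobian) injectivity of the differential, and expanding $f$ in the monomial basis directly produces $L_f$.
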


Since polynomial maps are differentiable, a geometric version of the implicit function theorem holds.

\begin{theorem}\label{thm: implicit function theorem}
 Let $\Var{X},\Var{Y}$ be irreducible varieties, with $\dim \calX = d$ and $\dim \calY = m$. Let $\phi: \Var{X} \to \Var{Y}$ be a morphism of varieties with $\calY = \bar{\phi(\calX)}$. Then, there exist Zariski open subsets $U_\calX \subseteq \calX$ and $U_\calY \subseteq \calY$ such that $U_\calX$ and $U_\calY$ are smooth manifolds, $\phi|_{U_\calX} : U_\calX \to U_\calY$ is a smooth map and for every $y \in U_\calY$, the fiber $\phi^{-1}(y)$ is a smooth manifold of dimension $d-m$.
\end{theorem}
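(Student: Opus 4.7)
The plan is to construct $U_\calX$ and $U_\calY$ as intersections of three Zariski open subsets: the smooth loci of the two varieties, the locus in $\calX$ where the differential of $\phi$ has maximal rank $m$, and the locus in $\calY$ over which fibers have the expected dimension $d-m$. Once this is done, the submersion theorem (a local form of the implicit function theorem) applied pointwise furnishes the smooth manifold structure on each fiber.

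First, I would replace $\calX$ and $\calY$ by their Zariski open smooth loci $\calX_{\mathrm{sm}}$ and $\calY_{\mathrm{sm}}$, which are smooth $\bbk$-manifolds of dimensions $d$ and $m$ by \cref{subsec: dimension and tangent}. The restriction $\phi|_{\calX_{\mathrm{sm}}}$ is automatically smooth since $\phi$ is polynomial. Next, I would establish that the generic rank of $\deriv{\phi}{x}$ is exactly $m$. The upper bound $\mathrm{rank}(\deriv{\phi}{x}) \le \dim T_{\phi(x)}\calY = m$ is immediate when $\phi(x) \in \calY_{\mathrm{sm}}$. For the lower bound, apply \cref{thm: theorem of dimension of the fibers} to obtain a Zariski open $V \subseteq \calY$ over which every fiber has dimension $d-m$; choosing a point $x$ in a fiber $\phi^{-1}(y)$ over some $y \in V \cap \calY_{\mathrm{sm}}$ that is smooth in both $\calX$ and in the fiber, the rank--nullity identity $\mathrm{rank}(\deriv{\phi}{x}) \ge d - \dim_x \phi^{-1}(y) = m$ forces equality. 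Let $U_\calX^0 \subseteq \calX_{\mathrm{sm}}$ be the Zariski open locus on which at least one $m \times m$ minor of the Jacobian of $\phi$ is non-zero.

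Define $U_\calY := V \cap \calY_{\mathrm{sm}}$ and $U_\calX := U_\calX^0 \cap \phi^{-1}(U_\calY)$, both Zariski open. By construction, $\phi|_{U_\calX} : U_\calX \to U_\calY$ is a smooth submersion of constant rank $m$. The submersion theorem then asserts that for every $y \in U_\calY$ and every $x \in \phi^{-1}(y) \cap U_\calX$, there is a Euclidean neighborhood of $x$ on which $\phi$ assumes the local normal form $(u_1,\dots,u_d) \mapsto (u_1,\dots,u_m)$, so the fiber is locally a smooth embedded submanifold of dimension $d-m$. Since this holds at every point of the fiber, the full fiber $(\phi|_{U_\calX})^{-1}(y)$ in $U_\calX$ is a smooth manifold of dimension $d-m$.

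The main obstacle is the real case, since \cref{thm: theorem of dimension of the fibers} is stated over $\bbC$. My strategy here is to complexify: the rank conditions are cut out by vanishing or non-vanishing of polynomials with real coefficients (the $m \times m$ Jacobian minors and the defining equations of $V$), so by \cref{thm: withney lemma 6 and 8} the corresponding Zariski open subsets of $\calX^{\bbC}$ and $\calY^{\bbC}$ restrict to non-empty Zariski open subsets of $\calX$ and $\calY$. A secondary subtlety is that in the real case the fiber $\phi^{-1}(y) \cap U_\calX$ may be empty for some $y \in U_\calY$, but this is harmless since the empty set is vacuously a smooth manifold of any dimension.
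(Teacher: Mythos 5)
Your construction has the same skeleton as the paper's: restrict to the smooth loci, further restrict to the locus where the Jacobian attains rank $m$, intersect with the preimage of a suitable open set in $\calY$, and invoke the constant-rank/submersion theorem to get the manifold structure on fibers. The gap is in the step where you establish that the generic rank of $\deriv{\phi}{x}$ equals $m$. Your rank--nullity inequality is reversed: you write $\rank(\deriv{\phi}{x}) \ge d - \dim_x \phi^{-1}(y)$, but the inclusion $T_x\phi^{-1}(y) \subseteq \ker(\deriv{\phi}{x})$ together with $\dim T_x\phi^{-1}(y) \ge \dim_x \phi^{-1}(y)$ (always, with equality exactly at smooth points of the reduced fiber) yields $\dim\ker(\deriv{\phi}{x}) \ge d-m$, and hence by rank--nullity $\rank(\deriv{\phi}{x}) \le m$. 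That is an \emph{upper} bound, which you already had from $\rank \le \dim T_{\phi(x)}\calY$, and \cref{thm: theorem of dimension of the fibers} alone cannot give you the lower bound. The hidden assumption in your argument is that a generic fiber has a point $x$ at which the scheme-theoretic fiber is smooth, so that $\dim\ker(\deriv{\phi}{x})$ really equals $d-m$; but that is precisely generic smoothness, i.e.\ the statement you are trying to prove, and \cref{thm: theorem of dimension of the fibers} says nothing about it (it is a dimension statement valid in any characteristic, where generic smoothness fails in general).

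The paper closes exactly this gap with Sard's Theorem: since $\phi(\calX_{\mathrm{sm}})$ is dense in the $m$-dimensional $\calY$ and, by Chevalley's Theorem (resp.\ Tarski--Seidenberg), contains a Euclidean-open subset of $\calY$, while Sard guarantees the set of critical values has measure zero, there must exist a regular value; hence the locus $U'_\calX$ of points where $\rank(\deriv{\phi}{x}) = m$ is non-empty, and being cut out by non-vanishing of an $m\times m$ minor on the smooth locus, it is Zariski open and dense. Replacing your rank--nullity step with this Sard argument (or an explicit appeal to generic smoothness in characteristic $0$) repairs the proof; the remaining parts of your argument, including the complexification step for the real case and the careful definition $U_\calX = U_\calX^0 \cap \phi^{-1}(U_\calY)$, are sound.
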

\begin{proof}
Let $U'_\calX \subset \calX$ be the subset of smooth points on which $\phi$ is regular; that is,
\[
U'_\calX = \{ x \in \calX : x \text{ is smooth in $\calX$ and } \rank( \deriv{\phi}{x} ) = m\}.
\]
By Sard's Theorem \cite[Theorem 6.8]{Lee2013}, $U'_\calX$ is non-empty and it is Zariski open because it is the intersection of two Zariski open subsets. Let $U_\calY$ be the subset of $\phi(U'_\calX)$ consisting of smooth points of $\calY$. Then $U_\calY$ is Zariski open in $\calY$. Let $U_\calX = \phi^{-1}(U_\calY)$. Since $\phi$ is a regular map, $U_\calX$ is Zariski open in $\calX$.

The restriction $\phi|_{U_\calX} : U_\calX \to U_\calY$ is a smooth map of manifolds, and for every $x \in U_\calX$, $\rank(\deriv{\phi}{x} ) = m$. By \cite[Theorem 5.22]{Lee2013}, we conclude that the fibers of $\phi|_{U_\calX}$ are smooth manifolds of dimension $d-m$.
\end{proof}

A consequence of \cref{thm: implicit function theorem} is that the rank of the differential at a point controls the local injectivity of the map. We have the following result.

\begin{corollary}\label{corol: injective differential implies injective map}
Let $\calX \subseteq \bbk^n$ be an irreducible variety. Let $\hat{\mu} : \bbk^n \to \bbk^s$ be a morphism of varieties and let $x \in \calX$. If $\ker (\deriv{\hat{\mu}}{x}) \cap T_x \calX = 0$, then there exists a Euclidean open neighborhood $U_x \subseteq \calX$ of $x$ such that $\hat{\mu}|_{U_x} : U_x \to \bbk^s$ is a diffeomorphism onto its image.
\end{corollary}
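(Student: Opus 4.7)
My plan is to reduce the statement to the local immersion theorem and then apply the inverse function theorem. The first step is to pass to the smooth locus: since the set of smooth points of $\calX$ is Zariski open, and the hypothesis $\ker(\deriv{\hat{\mu}}{x}) \cap T_x \calX = 0$ requires $T_x \calX$ to be defined in the sense of \cref{subsec: dimension and tangent}, I may assume $x$ is smooth and work on a Euclidean open neighborhood $V \subseteq \calX$ of $x$ which is a smooth $d$-dimensional submanifold of $\bbk^n$, where $d = \dim \calX$. On $V$, the restriction $\mu := \hat{\mu}|_V$ is a smooth map whose differential at $x$ is the restriction $\deriv{\mu}{x} = \deriv{\hat{\mu}}{x}|_{T_x \calX}$. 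The hypothesis is therefore precisely the assertion that $\deriv{\mu}{x} : T_x \calX \to \bbk^s$ is injective, so that $\mu$ is an immersion at $x$.

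Next, I would extract a genuine local diffeomorphism via the inverse function theorem. Let $L := \im(\deriv{\mu}{x}) \subseteq \bbk^s$, a $d$-dimensional linear subspace, and let $W \subseteq \bbk^s$ be any linear complement. Denote by $\pi : \bbk^s \to L$ the projection along $W$. The composition $\pi \circ \mu : V \to L$ is smooth, and its differential at $x$ equals $\pi \circ \deriv{\mu}{x}$, which by construction is a linear isomorphism $T_x \calX \to L$. The inverse function theorem therefore yields a Euclidean open neighborhood $U_x \subseteq V$ of $x$ such that $\pi \circ \mu|_{U_x}$ is a diffeomorphism onto an open subset $\Omega \subseteq L \cong \bbk^d$.

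Finally, I would transfer this to $\mu$ itself. Since $\pi \circ \mu|_{U_x}$ is injective, so is $\mu|_{U_x}$. Let $g := (\pi \circ \mu|_{U_x})^{-1} : \Omega \to U_x$, which is smooth. Under the decomposition $\bbk^s = L \oplus W$, the composition $\mu \circ g : \Omega \to \bbk^s$ has the form $p \mapsto (p, w(p))$ for some smooth map $w : \Omega \to W$, and hence $\mu(U_x) = \{(p, w(p)) : p \in \Omega\}$ is the graph of $w$, a smoothly embedded $d$-dimensional submanifold of $\bbk^s$. The inverse $(\mu|_{U_x})^{-1} = g \circ \pi|_{\mu(U_x)}$ is then smooth by composition, yielding the desired diffeomorphism onto its image. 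I expect no substantial obstacle beyond the bookkeeping needed to verify that $\mu(U_x)$ is a submanifold and that $(\mu|_{U_x})^{-1}$ is smooth; the core geometric content is entirely packaged in the inverse function theorem applied to $\pi \circ \mu$ at the immersion point $x$.
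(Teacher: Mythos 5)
Your proof is correct, and it takes a genuinely different route from the paper's. The paper restricts $\hat{\mu}$ to $\calX$ to get $\psi$, observes that $\rank(\deriv{\psi}{x}) = \dim\calX$ and (by semicontinuity) that the rank stays full nearby, then invokes its \cref{thm: implicit function theorem} (which is proved via Sard's theorem and the constant-rank machinery) to conclude that fibers of $\psi$ near $x$ are zero-dimensional, hence discrete; the final passage to a diffeomorphism onto the image is left somewhat implicit in the phrase ``up to restricting $U_x$ \ldots we conclude.'' You instead reprove the local immersion theorem directly: project $\mu$ onto $L = \im(\deriv{\mu}{x})$ along a complement, apply the classical inverse function theorem to $\pi\circ\mu$, observe that $\mu(U_x)$ is the graph of the smooth map $w = ((1-\pi)\circ\mu)\circ g$, and write the inverse as $g\circ\pi|_{\mu(U_x)}$. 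What your approach buys is an explicit, self-contained argument that does not pass through Sard's theorem or the fiber-dimension result, and that delivers the submanifold structure of $\mu(U_x)$ and the smoothness of $(\mu|_{U_x})^{-1}$ directly rather than by appeal; this makes the ``diffeomorphism onto its image'' conclusion visibly complete, whereas the paper's version takes it for granted once discreteness of the fiber is established. The paper's proof is shorter and reuses a theorem already in its toolbox, but at the cost of a slightly terse ending. Both proofs correctly assume $x$ is a smooth point of $\calX$, which you are right to flag as forced by the hypothesis since $T_x\calX$ in the paper's sense is defined only at smooth points.
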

\begin{proof}
Let $\psi : \calX \to \bbk^s$ be the restriction of $\hat{\mu}$ to $\calX$. Then the kernel of its derivative is $\ker (\deriv{\psi}{x} ) = \ker ( \deriv{\hat{\mu}}{x} ) \cap T_x \calX = 0$, so that $\rank(\deriv{\psi}{x}) = \dim \calX$. 
This implies $x$ is a smooth point of $\calX$. Hence, an open neighborhood of $x$ is a $\bbk$-manifold. Since $\rank(\deriv{\psi}{x}) = \dim \calX$, the assumptions of the inverse function theorem for manifolds are satisfied. For $\bbk=\bbR$, the claim follows from \cite[Theorem 4.5]{Lee2013}.

For $\bbk=\bbC$, the proof of \cite[Theorem 4.5]{Lee2013} applies by replacing ``smooth manifold'' with ``complex manifold'' \cite[Chapter IV]{FG2002}, ``smooth map'' with ``holomorphic function'' \cite[Chapter I]{FG2002}, ``diffeomorphism'' with ``biholomorphism'' \cite[Chapter I]{FG2002}, and using the inverse function theorem for holomorphic functions \cite[Chapter I, Theorem 7.5]{FG2002} instead of the one for reals.
\end{proof}

Finally, we need the theorem of birationality to a hypersurface. It is a strong result on generic identifiability that does not depend on the geometry of the variety.
\begin{theorem}[Birationality to a hypersurface]\label{thm:bir with hyper}
Let $\Var{X}\subseteq \CC^n$ be a $d$-dimensional variety. Let $\hat{\mu} : \bbC^n \to \bbC^{d+1}$ be a generic projection. Then $\mu=\hat{\mu}|_\Var{X}$ is generically one-to-one, i.e., a generic point of the image of $\mu$ has a unique preimage in $\Var{X}$.
\end{theorem}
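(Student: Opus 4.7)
The plan is a standard incidence-variety dimension count. Write $L = \mathrm{Hom}(\bbC^n, \bbC^{d+1}) \cong \bbC^{n(d+1)}$ for the parameter space of linear maps, and set $\calX^{(2)} = \{(x_1,x_2) \in \calX \times \calX : x_1 \neq x_2\}$. Since $\calX$ is irreducible and we may assume $d \geq 1$ (the case $d=0$ being trivial), $\calX^{(2)}$ is Zariski open and dense in $\calX \times \calX$, hence irreducible of dimension $2d$. Consider the incidence variety of pairs of points that get identified,
\begin{equation*}
W = \bigl\{ (x_1, x_2, \hat{\mu}) \in \calX^{(2)} \times L : \hat{\mu}(x_1) = \hat{\mu}(x_2) \bigr\}.
\end{equation*}

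I would first compute $\dim W$ by projecting $\pi_1 : W \to \calX^{(2)}$. For each $(x_1,x_2) \in \calX^{(2)}$, the fiber is cut out by $\hat{\mu}(x_1 - x_2) = 0$. Writing $\hat{\mu}$ as a $(d+1) \times n$ matrix, each of its $d+1$ rows is subject to the single nonzero linear condition $\mathrm{row}_i \cdot (x_1 - x_2) = 0$, and because these conditions involve disjoint blocks of entries of $\hat{\mu}$ they are independent. The fiber is therefore a linear subspace of $L$ of dimension $(n-1)(d+1)$, and \cref{thm: theorem of dimension of the fibers} yields $\dim W = 2d + (n-1)(d+1)$.

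Next, I would project $\pi_2 : W \to L$. Either $\pi_2$ is not dominant, in which case a generic $\hat{\mu} \in L$ has empty fiber and $\hat{\mu}|_\calX$ is already globally injective; or $\pi_2$ is dominant and \cref{thm: theorem of dimension of the fibers} gives, for generic $\hat{\mu}$, a fiber $W_{\hat{\mu}}$ of dimension
\begin{equation*}
\dim W - \dim L \;=\; 2d + (n-1)(d+1) - n(d+1) \;=\; d - 1.
\end{equation*}
The image of $W_{\hat{\mu}}$ under the projection $(x_1,x_2) \mapsto x_1$ is then contained in a subset of $\calX$ of dimension at most $d-1$, hence in a proper closed subvariety. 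For $x$ in the complementary Zariski open set, there is no $x' \neq x$ in $\calX$ with $\hat{\mu}(x') = \hat{\mu}(x)$, which is exactly the claim.

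The main obstacle is purely bookkeeping: one has to justify carefully the independence of the $d+1$ linear conditions cutting out the fiber of $\pi_1$, since this is exactly where the hypothesis on the target dimension enters. Taking $\bbC^d$ in place of $\bbC^{d+1}$ would yield a generic fiber of dimension $d$, which is large enough to surject onto $\calX$ and so would not force generic injectivity. No finer geometric information about $\calX$ is used, which matches the fact, stated in the answer to \cref{Q3}, that the bound $s \geq d+1$ is sharp for generic projections independently of $\calX$.
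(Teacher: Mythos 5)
Your proof is correct, but it is a genuinely different argument from the paper's: the paper simply cites \cite[Theorem~2.1.18]{Shafarevich2013}, where the standard proof is field-theoretic — one observes that the function field $\bbC(\calX)$ has transcendence degree $d$ over $\bbC$, invokes the primitive element theorem (characteristic $0$) to generate it by $d+1$ generic linear forms, and concludes that the corresponding projection is birational onto its image, a hypersurface in $\bbC^{d+1}$. Your incidence-variety dimension count is a self-contained geometric alternative: you parametrize pairs $(x_1,x_2,\hat\mu)$ with $\hat\mu(x_1)=\hat\mu(x_2)$, compute $\dim W = 2d + (n-1)(d+1)$ by projecting to $\calX^{(2)}$ (and note $W$ is irreducible, being the total space of a vector subbundle of the trivial bundle over the irreducible base $\calX^{(2)}$), then project to $L$ and deduce via \cref{thm: theorem of dimension of the fibers} that the ``bad'' locus in $\calX$ has dimension at most $d-1$. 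One small step worth making explicit: to pass from ``generic $x\in\calX$ has $\mu^{-1}(\mu(x))=\{x\}$'' to the stated form ``generic $y$ in the image has a unique preimage,'' apply Chevalley's theorem (\cref{chevalley}) so that the image of the good Zariski-open set contains a dense open subset of $\mu(\calX)$. The field-theoretic proof buys the stronger conclusion that $\mu$ is birational onto a hypersurface (which explains the theorem's name), while your argument is more elementary, stays entirely within the dimension-counting toolkit already used elsewhere in \cref{q_and_a_proofs}, and makes transparent exactly where the target dimension $d+1$ enters — as you correctly note, replacing $d+1$ by $d$ would leave a $d$-dimensional bad locus, which cannot be excluded.
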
 
\begin{proof}
We refer to \cite[Theorem 1.8 and Remark 1.2]{Shafarevich2013}.
\end{proof}

\subsection{The Noether Normalization Lemma}\label{sec:complex_AG}
The main results about the required number of measurements $s$ for recoverability are consequences of the following fundamental theorem in commutative algebra.

\begin{theorem}[Noether's Normalization Lemma]\label{thm:Noether Norm}
Let $\bbk$ be an infinite field and $\Var{X}\subset \bbk^n$ be a variety of dimension $d$ and degree $\delta$. Let $\hat{\mu}: \bbk^n \to \bbk^d$ be a generic affine linear projection and let $\mu=\hat{\mu}|_\Var{X}$ be its restriction to $\Var{X}$. Then $\mu : \Var{X} \to \bbk^d$ is a finite map. In particular, every fiber of $\mu$ is finite.
\end{theorem}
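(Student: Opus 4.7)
The plan is to proceed by induction on the codimension $n - d$. The base case $n = d$ is immediate: an irreducible $d$-dimensional subvariety of $\bbk^d$ must equal $\bbk^d$, and a generic affine linear endomorphism of $\bbk^d$ is an isomorphism, which is trivially finite.

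For the inductive step with $n > d$, I would first project $\calX$ into $\bbk^{n-1}$ along a carefully chosen direction. Since $\dim \calX < n$, there exists a nonzero $f \in I(\calX)$. Decompose $f = f_0 + \cdots + f_D$ into homogeneous components with $f_D \neq 0$. Because $\bbk$ is infinite, the nonzero homogeneous polynomial $f_D$ does not vanish identically, so $U = \{v \in \bbk^n : f_D(v) \neq 0\}$ is a nonempty Zariski open set. For any $v \in U$, let $\pi_v\colon \bbk^n \to \bbk^n/\langle v\rangle \cong \bbk^{n-1}$ be the associated linear projection. Its restriction $\pi_v|_\calX$ has fibers of cardinality at most $D$: the preimage of $y \in \bbk^{n-1}$ is an affine line $\tilde y + \bbk v$, and the restriction of $f$ to this line is a polynomial in the line parameter with leading coefficient $f_D(v) \neq 0$, hence of degree exactly $D$ and with at most $D$ roots. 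By \cref{thm: theorem of dimension of the fibers}, the closure $\calX_1 := \overline{\pi_v(\calX)} \subset \bbk^{n-1}$ is irreducible of dimension $d$. Applying the induction hypothesis to $\calX_1 \subset \bbk^{n-1}$ produces an affine linear $\psi\colon \bbk^{n-1} \to \bbk^d$ with $\psi|_{\calX_1}$ finite; the composition $\psi \circ \pi_v\colon \bbk^n \to \bbk^d$ is then an affine linear map whose restriction to $\calX$ has finite fibers, since a composition of finite-fibered maps has finite fibers.

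The main obstacle will be upgrading ``some projection works'' to ``a generic projection works.'' I would handle this by semicontinuity of fiber dimension: the locus of pairs $(\hat\mu, y) \in \mathrm{Hom}(\bbk^n, \bbk^d) \times \bbk^d$ for which the fiber $(\hat\mu|_\calX)^{-1}(y)$ has positive dimension is Zariski closed, and its image in $\mathrm{Hom}(\bbk^n, \bbk^d)$ under the first projection is constructible by Chevalley's Theorem (\cref{chevalley}). The explicit construction above produces a map outside this image, so the locus of maps with everywhere finite fibers contains a nonempty Zariski open subset of the space of affine linear projections, which is exactly the desired genericity. If one seeks the stronger algebraic meaning of \emph{finite morphism}, namely that $\bbk[\calX]$ is a finitely generated module over the coordinate ring pulled back from $\bbk^d$, I would further observe that after a suitable generic linear change of variables $f$ becomes monic of degree $D$ in $x_n$ up to a nonzero scalar, making $x_n$ integral over $\bbk[x_1, \ldots, x_{n-1}]$ modulo $I(\calX)$; propagating this integrality through the induction yields module-finiteness over $\bbk[y_1, \ldots, y_d]$.
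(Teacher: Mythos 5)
Your inductive, one-variable-at-a-time reduction using the leading form $f_D$ is exactly the classical proof of Noether normalization, and it is the same route as the paper's cited source (Gathmann, Proposition 10.5). The base case, the projection step showing fibers of $\pi_v|_\calX$ have cardinality at most $D$, and the closing integrality argument for module-finiteness are all correct.

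The gap is in the passage from ``some projection works'' to ``a generic projection works.'' You observe that the bad locus $B \subset \mathrm{Hom}(\bbk^n, \bbk^d)$ is constructible (via the incidence variety and Chevalley), and that your explicit construction produces a map outside $B$; you then conclude that the complement of $B$ contains a nonempty Zariski open set. This inference fails for constructible sets in general: a constructible proper subset of an irreducible variety can be dense (for instance $\bbA^1 \setminus \{0\} \subset \bbA^1$), and then its complement contains no nonempty open set. Knowing that one point lies outside $B$ proves $B$ is proper, not that $B$ is nowhere dense, and upper semicontinuity of fiber dimension gives closedness only on the source of the family, not on the base after projecting through the non-proper variety $\calX$.

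The standard repair is to pass to the projective closure. Let $\bar{\calX} \subset \PP^n$ and let $\calX_\infty = \bar{\calX} \cap H_\infty$ be the part at infinity, a projective variety of dimension at most $d-1$. A surjective affine linear map $\hat{\mu}\colon \bbk^n \to \bbk^d$ has a center of projection $L_{\hat{\mu}} \subset H_\infty$ of dimension $n-d-1$, and $\mu = \hat{\mu}|_\calX$ is a finite morphism if and only if $L_{\hat{\mu}} \cap \calX_\infty = \emptyset$. The incidence variety $\{(\hat{\mu}, p) : p \in L_{\hat{\mu}} \cap \calX_\infty\}$ is closed, and because $\calX_\infty$ is projective (hence proper), its image in $\mathrm{Hom}(\bbk^n, \bbk^d)$ --- which is precisely the bad locus --- is closed, not merely constructible. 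A dimension count ($\dim L_{\hat{\mu}} + \dim \calX_\infty < \dim H_\infty$), or your explicit construction, then shows it is a proper closed subset, which is the genericity claim. This viewpoint also cleanly separates ``all fibers finite'' from the finite morphism condition the theorem actually asserts: projecting $V(xy-1) \subset \bbk^2$ to the $x$-axis has all fibers of cardinality at most one but is not a finite morphism, and indeed the center of that projection lies on the closure of the hyperbola at infinity.
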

\begin{proof}
The proof of this theorem is classic. We provide a sketch of the proof following \cite[Chapter 10]{Gathmann:CommAlgebra}.

The usual algebraic formulation of Noether's normalization lemma is as follows.
Let $R$ be a finitely generated algebra over $\bbk$ with generators $x_1 \vvirg x_n$. Then, there exists an integer $d$ and algebraically independent elements $r_1 \vvirg r_d \in R$ such that $R$ is a finite module over $\bbk[r_1 \vvirg r_d]$. Moreover, $r_1 \vvirg r_d$ can be chosen to be linear combinations of the generators $x_1 \vvirg x_n$.
This statement is proved in \cite[Proposition 10.5]{Gathmann:CommAlgebra}.

The geometric statement is obtained by taking the affine coordinate ring of $\Var{X}$, i.e., $R = \bbk[\Var{X}] = \bbk [\bbk^n] / I(\Var{X})$, which is finitely generated with generators $x_1 \vvirg x_n$, the (equivalence classes modulo $I(\Var{X})$ of the) coordinates of $\bbk^n$. The algebraic statement guarantees that there exist linear combinations $r_1 \vvirg r_d$ of the $x_j$'s such that $\bbk[\Var{X}]$ is a finitely generated module over $\bbk[r_1 \vvirg r_d]$. In fact, the proof \cite[Proposition 10.5]{Gathmann:CommAlgebra} and \cite[Lemma 10.3]{Gathmann:CommAlgebra} show that a generic choice of linear combinations satisfies the statement. The inclusion map
\(
 \bbk[r_1 \vvirg r_d] \to R
\)
defines an algebraic map $\mu : \Var{X} \to \bbk^d$ as $\bbk[r_1 \vvirg r_d]$ can be regarded as the coordinate ring of the affine space $\bbk^d$. The fact that the $r_j$'s are linear combinations of the $x_i$'s guarantees that $\mu$ is the restriction of a linear map $\hat{\mu} : \bbk^n \to \bbk^d$.
\end{proof}
 
Noether's Normalization Lemma is sharp in the following sense:
\begin{enumerate}
\item For any affine linear projection $\hat{\mu}: \bbk^n \to \bbk^{d-1}$, the fiber of $\mu=\hat{\mu}|_\Var{X}$ over $y\in\bbk^{d-1}$ may be infinite. Further, if $\bbk$ is algebraically closed, then the generic fiber of $\mu$ is infinite.
\item It is possible that special projections $\hat{\mu}: \bbk^n \to \bbk^d$ map $\Var{X}$ to a proper subvariety of $\bbk^d$. In this case, the fiber of $\mu=\hat{\mu}|_\Var{X}$ at a generic point of $\bbk^d$ is empty and any fiber over the image of $\mu$ is infinite.
\end{enumerate}

An example when case $(2)$ holds is a projection of any product of (positive dimensional) varieties $\calX_1\times \calX_2\subset \bbk^{n_1}\times\bbk^{n_2}=\bbk^{n_1+n_2}$ to $\bbk^{n_1}$.

\begin{remark}
The finiteness condition of the map $\mu : \Var{X} \to \bbk^d$ in the statement of the Noether normalization lemma has stronger implications than the fact that the fibers are finite. In fact, \cref{thm:Noether Norm} implies that for every coordinate $x_j$ of $\bbk^n$, there exists a fixed univariate polynomial $P_j \in \bbk[y_1,\dots,y_d][x_j]$, (i.e., the coefficients of $P_j$ are polynomials on~$\bbk^d$ depending on $\Var{X}$ and on $\mu$) with the property that for every $y\in \bbk^d$, the $j$th coordinates of the points of $\mu^{-1}(y)$ are the roots of $P_j(y,-) \in \bbk[x_j]$. Once we find the polynomials $P_j$, the $j$th coordinates of the points in the fiber of $\mu$ thus vary like the roots of $P_j$ as $y \in \bbk^d$ varies.
This provides an additional viewpoint on the continuity of the solutions of the inverse problem \cref{eqn_ip}, which we investigated in \cref{sec_continuity}.
\end{remark}


\section{Proofs for the Q\&A section}\label{q_and_a_proofs}
In this section, we give complete proofs of the correctness of the answers provided in \cref{q_and_a}. The proofs in this section exploit the fact that a generic point in $\Var{X}$ is a generic point in the complexification $\Var{X}^{\mathbb C}$ of $\Var{X}$, and that $d=\dim \Var{X}$ equals the complex dimension of $\Var{X}^{\mathbb C}$ (this is true irrespective of $\Var{X}$ being real or complex). For models given in implicit form (IS) this is implied by \cref{thm: withney lemma 6 and 8}. For models in explicit form (ES) we can use \cref{thm: dimension image of a map}.

\subsection{The answer to \cref{QExistence}}
The restatement in terms of a system of polynomial equations is straightforward. The fact that any system of polynomial equations over $\CC$ has a solution if and only if the ideal generated by the polynomials does not contain $1$ is one of the forms of Hilbert's Nullstellensatz \cite[Corollary 1.6]{Eis:CommutativeAlgebra}. Similarly, the statement over the reals can be found in \cite[Corollary 4.1.8]{BCR1998}. The fact that checking if an ideal contains $1$ may be done using Gr\"obner bases follows from the definition of Gr\"obner basis. \qed

\subsection{The answer to \cref{question:finite map}}
If $s \geq d$, \cref{thm:Noether Norm} guarantees that a generic projection has finite fibers. The inequality is sharp, both in the real and in the complex case. Indeed, suppose $s \leq d-1$ and distinguish two cases.

First, if $\bbk=\CC$, by \cref{thm: dimension image of a map}, there exists a Zariski open subset $U\subset \mu(\Var{X})$ such that for any $y\in U$ we have $\dim  \mu^{-1}(\mu(y)) +\dim \mu(\Var{X})=\dim \Var{X}$. In particular, since $\dim \mu(\Var{X})\leq s < d$, we have $\dim \mu^{-1}(\mu(x))>0$. This implies that the generic fibers are infinite.

Second, if $\bbk=\RR$, the map $\mu: \Var{X}\rightarrow \RR^s$ is a morphism of varieties. Therefore, by \cref{thm: implicit function theorem}, its generic fibers contain manifolds of dimension equal to $\dim \calX - \dim (\im (\mu)) \geq \dim \calX - s$. In particular, if $s < d$, generic fibers are infinite. \qed

\subsection{The answer to \cref{question:finite map2}}
By \cref{lemma: veronese properties} and Theorem \ref{thm:degVer}, this reduces to \cref{question:finite map}. \qed

\subsection{The answer to \cref{Q_coordinate_recovery}}
The condition $s\geq d$ is necessary as in the answer to \cref{question:finite map}. We prove that a coordinate projection exists for $s\geq d$.

Assume that $s =d$.
Let $x\in \Var{X}$ be a smooth point. Let $v_1,\dots, v_d$ be a spanning set for $T_x\calX$, the tangent space to $\calX$ at $x$. Let $M$ be the matrix whose rows are the vectors $v_i$. Hence, $M$ is a $d \times n$ matrix with $\rank(M) = d$. In particular, $M$ contains a $d\times d$ submatrix $N$ with $\rank(N) = d$. This submatrix is obtained by selecting $d$ columns of $M$, or, equivalently, $d$ coordinates in $\bbk^n$. We claim that the projection~$\hat{\mu}$ to these~$d$ coordinates is generically recoverable when restricted to $\Var{X}$.

Since $\hat{\mu}$ is linear, the differential $\deriv{\hat{\mu}}{x}$ coincides with $\hat{\mu}$ itself. Now, its image $\im ( \deriv{\hat{\mu}}{x}  : \bbk^n \to \bbk^d)$ restricted to $T_x \calX$ is the column span of the matrix $N$, which coincides with the whole $\bbk^d$. Thus, $\deriv{\hat{\mu}}{x}$ is of full rank and $T_x \calX \cap \ker( \deriv{\hat{\mu}}{x}) = 0$. By \cref{corol: injective differential implies injective map}, there is a Euclidean open neighborhood $\tilde{U}$ of $x\in\Var{X}$ such that $\mu=\hat{\mu}|_\calX$ is a diffeomorphism of $\tilde{U}$ onto its image.

We distinguish two subvarieties $\Var{S}_1,\Var{S}_2$ of $\calX$. Let $\Var{S}_1$ be the variety of singular points of $\calX$, which is a proper subvariety of $\calX$. Let $\Var{S}_2$ be the set consisting of the points $x \in \calX$ such that $T_x \calX \cap \ker(\deriv{\hat{\mu}}{x}) \neq 0$. This is a closed condition. Since $\tilde{U} \subseteq \calX \setminus \Var{S}_2$, we deduce that $\Var{S}_2$ is a proper subvariety of $\calX$. Let $U = \calX \setminus (\Var{S}_1 \cup \Var{S}_2)$, which is Zariski open in $\calX$.

We can conclude that the fiber over every point $y\in \bbk^d$ with $y \in \mu(U)$ is finite. Indeed, suppose that it has positive dimension. Then there exists an $x \in \mu^{-1}(y)$ that is smooth, $T_x \calX \cap \ker(\deriv{\hat{\mu}}{x}) = 0$, and every Euclidean neighborhood of $x$ contains infinitely many points of $\mu^{-1}(y)$. But this contradicts \cref{corol: injective differential implies injective map}. \qed
\begin{remark}
We note that a central point of the previous proof is a simple fact from linear algebra, that if we have a full rank $r$ rectangular matrix, then we can choose a square submatrix of rank $r$. For a matroidal perspective on this proof we refer to \cite{rosen2020algebraic, Tsakiris2021b}.  
\end{remark}

\subsection{The answer to \cref{question: specific map is recoverable}}

In the implicit setting (IS) our model is given by  $\calX = \{ x \in \bbk^n : f_1 (x) = \cdots = f_k(x) = 0\}$. Let $x \in \calX$ be a point satisfying the condition described in the answer. We note that this is an open condition, i.e.,~if it holds for some $x$, then there is a Zariski open set $U\subset\Var{X}$ such that the condition holds for all $x\in U$.

Let us consider the Zariski open set $\mathcal X^{\mathrm{sm}}\subset\Var{X}$ of smooth points of $X$. Taking $x\in U\cap \mathcal X^{\mathrm{sm}}$ we see that
$\ker ( \deriv{\mu}{x} ) \cap \ker (\deriv{f}{x}) = 0$ implies that the restriction of $\deriv{\mu}{x}$ to~$T_x\calX$ is injective.
By \cref{corol: injective differential implies injective map}, we deduce that $\mu$ is injective in a Euclidean neighborhood of $x$. The same argument as in the answer to \cref{Q_coordinate_recovery} shows that $\mu$ is generically recoverable.

In the explicit setting (ES), let $\calX = \bar{\im(\phi)}$ where $\phi : \bbk^\ell \to \bbk^n$ is a polynomial parameterization. The map $\mu \circ \phi$ parameterizes the image $\mu( \calX)$. If $t^* \in \bbk^\ell$ is a point with $\rank ( \deriv {( \mu \circ \phi)}{t^*} ) = d$, then $\rank \bigl( \deriv{(\mu \circ \phi)}{t} \bigr) \geq d$ for generic $t \in \bbk^\ell$ by semicontinuity.
By \cref{thm: dimension image of a map}, we obtain $\dim \mu( \calX) \geq d$. Since $\dim\calX = d$, $\mu : \calX \to \bbk^s$ maps $\calX$ onto a (possibly semi-algebraic) set of the same dimension. By \cref{thm: implicit function theorem}, the generic fiber of~$\mu$ is $0$-dimensional, so that~$\mu$ is generically recoverable. \qed

\subsection{The answer to \cref{Q_which_coordinate_projection}}
The answer to \cref{question: specific map is recoverable} entails that the coordinate projection should satisfy the condition that the differential of $\mu \circ \phi$ has rank $d$ at some point $t \in \bbk^\ell$. The matrix
\(
\left[\begin{smallmatrix}  \frac{\partial \phi_{i_p}}{\partial t_j} \end{smallmatrix}\right]_{\substack{p = 1 \vvirg s \\ j = 1 \vvirg \ell}}
\)
is precisely the matrix representation of this differential.\qed

\subsection{The answer to \cref{Q3}}
For any field, if $\Var{X}$ is an affine space then a generic projection to $\bbk^d$ is identifiable everywhere.

Consider the case  $\bbk=\CC$.
For $s\geq d+1$ the answer follows from \cref{thm:bir with hyper}.
Assume now that a generic projection $\hat{\mu}$ from $\CC^n$ to $\CC^d$ is generically identifiable. Let $\mu=\hat{\mu}|_\calX$. If $\dim \mu(\Var{X})<d$ then, as in the proof for \cref{question:finite map}, the generic fibers would be of positive dimension. Hence, we may assume that $\mu(\Var{X})$ is dense in $\CC^d$ by \cref{chevalley}.
The kernel of $\hat{\mu}$ is a generic linear subspace in $\CC^n\supset \Var{X}$. Hence, the fiber of $\hat{\mu}$ over a generic point of $\mu(\Var{X})$ is a generic affine subspace of~$\CC^n$ of codimension $d$. Thus, it must intersect $\Var{X}$ in $\delta$ distinct points, where $\delta$ is the degree of $\Var{X}$. Since $\mu$ is generically identifiable, this means that $\Var{X}$ must have degree one, i.e., it is an affine subspace (see the discussion at the end of \cref{sec:degree}).

For $\bbk=\RR$ and $s\geq d+1$ we may take the complexification of the variety $\Var{X}$ and see that a generic complex, hence, a fortiori real, fiber is unique.
To see that $s=d$ measurements do not suffice, we may take $\Var{X}$ to be a sphere.
Generic points in the image $\mu(\calX)$ of a generic projection of the sphere have $2$ elements in the preimage.

If $s<d$, as in the proof for \cref{question:finite map}, generic fibers have positive dimension. \qed

\subsection{The answer to \cref{Q4}}
As in the proof for the answer to \cref{question:finite map2} we conclude by applying \cref{lemma: veronese properties}.

We note that for $d>0$ and $p>1$ the Veronese map strictly increases the degree, thus we may exclude the case of the affine subspace in \cref{Q3}.
\qed

\subsection{The answer to \cref{ques:inj}}
Let $\hat{\mu}:\bbk^n\to\bbk^s$ be the linear map that restricts to $\mu$ on $\Var{X}$.
For distinct $x_1,x_2\in \Var{X}$ we have that $\hat{\mu}(x_1)=\hat{\mu}(x_2)$ is equivalent to
\begin{align*}
\hat{\mu}(x_1-x_2)=0
\quad\Leftrightarrow\quad \exists p \in \Delta(\Var{X})\setminus\{0\} : \hat{\mu}(p)=0
\quad\Leftrightarrow\quad \ker(\hat{\mu}) \cap \Delta(\Var{X})\neq 0.
\end{align*}
Thus, the identifiability property depends only on the kernel of $\hat{\mu}$. The kernel of a generic $\hat{\mu}$ is a generic linear subspace of codimension $s$. A generic linear subspace of codimension $s$ intersects $\Delta(\Var{X})$ only in zero if $s> \dim \Delta(\Var{X})$ (cf.~\cref{sec:degree}), which proves the first equality in the answer.
As $\Delta(\Var{X})$ may be realized as the image of $\Var{X}\times \Var{X}$ we must necessarily have the inequality $\dim \Delta(\Var{X})\leq 2d$.\qed

\subsection{The answer to \cref{ques:inj1}}
Since Zariski open subsets are also Euclidean open, the answer to \cref{ques:inj} concludes the proof.
\qed

\subsection{The answer to \cref{Q8}}
If $s<d$, then the answer to \cref{question:finite map} shows that $\mu$ cannot be a local diffeomorphism, because it is not recoverable generically.

Let $s\geq d$. We denote the set of singular points in $\mu(\Var{X})$ by $\Var{S}$, so that $\mu(\Var{X})\setminus \Var{S}$ is a smooth embedded submanifold of $\bbk^s$ of dimension equal to the dimension of~$\mu(\Var{X})$.

The preimage $\mu^{-1}(\Var{S}) \cap \mathcal X$ is a proper subvariety of $\mathcal X$. As in the answer to \cref{Q_coordinate_recovery}, we consider two additional subvarieties of~$\Var{X}$. The first~$\Var{S}_1$ is the set of singular points, which is a proper subvariety.
The second one~$\Var{S}_2$ is the set of points $x\in \Var{X}$ such that $(T_x\Var{X})\cap \ker(\deriv{\hat{\mu}}{x}) \neq 0$. Since $\dim \ker(\hat{\mu}) = n - s \leq n - d$, the variety $\Var{S}_2$ is a proper subvariety. Then, we define
\(
\calX' := \calX \setminus (\mu^{-1}(\Var{S}) \cup \Var{S}_1 \cup \Var{S}_2).
\)
This is a Zariski open subset of $\mathcal X$ contained in the smooth locus of $\mathcal X$. Hence,~$\mathcal X'$ is a smooth embedded submanifold of~$\bbk^s$. By construction, $\mu(\Var{S}_1)\cup \mu(\Var{S}_2)$ is of lower dimension in $\mu(\calX)$. Therefore, it is also of lower dimension in the manifold $\mu(\calX)\setminus \Var{S}$. This implies that $\mu(\calX') $ is an open subset of a smooth embedded submanifold of~$\bbk^s$, hence it is itself a smooth embedded submanifold of $\bbk^s$.

The derivative
$\deriv{(\mu|_{\Var{X}'})}{x}$ is an isomorphism of the tangent spaces $T_x \Var{X}'$ and $T_{\mu(x)}\mu(\Var{X}')$. By the inverse function theorem for manifolds, $\mu|_\Var{X'}$ is a local diffeomorphism onto its image.
\qed

\begin{remark}
Note, however, in general, for $s=d$ there will be no Zariski open neighbourhoods that would give isomorphic algebraic varieties. This means that the inverse map will be analytic but not algebraic.
\end{remark}
\subsection{The answer to \cref{Q9}}
For $s \ge d+1$, the answer to \cref{Q3} implies that $\mu$ is identifiable for generic $x\in \Var{X}$. Let $\Var{S},\Var{S}_1,\Var{S}_2$ be as in the answer to \cref{Q8} and let $\Var{S}_3\subset \calX$ be subvariety of points where $\mu$ is not identifiable. Let
\(\calX' := \calX \setminus ( \mu^{-1}(\Var{S}) \cup \Var{S}_1\cup \Var{S}_2\cup \Var{S}_3).\)
As in the answer to \cref{Q8} we have that $\mathcal X'$ and $\mu(\mathcal X')$ are smooth embedded submanifolds of $\bbk^s$ and that $\mu|_{\calX'}$ is a local diffeomorphism. Since $\mu|_{\calX'}$ is also injective, it is a global diffeomorphism.

It remains to prove that if $\bbk=\CC$, $s=d$ and $\Var{X}$ is not an affine space then we do not have a global diffeomorphism. This follows from the answer to \cref{Q3}.
 \qed

\subsection{The answer to \cref{Q11}}
Recall the definitions of the singular locus $\Var{S}_1\subset \mathcal X$ and the set of points $\Var{S}_2\subset \mathcal X$ for which $(T_x\Var{X})\cap \ker(\deriv{\hat{\mu}}{x}) \neq 0$ from the answer to \cref{Q8}. The smooth locus is $\mathcal X^{\mathrm{sm}} := \mathcal X\setminus \mathcal S_1$. We also set $\mathcal X':=\mathcal X^\mathrm{sm}\setminus \mathcal S_2$.
As in the answer to \cref{Q8}, both $\mathcal X^\mathrm{sm}$ and $\mathcal X'$ are Zariski open submanifolds of $\mathcal X$. We show the asserted characterization of the condition number for smooth points $x\in \mathcal X^{\mathrm{sm}}$. We distinguish two cases.

The first case is when $x\in\calX'$. In this case we have $(T_x\Var{X})\cap \ker(\deriv{\hat{\mu}}{x}) = 0$, which implies that the derivative $\deriv{\mu}{x}$
is of full rank. This means that $\mu|_{\Var{X}'}: \Var X' \to \bbk^s$
is an immersion. By \cite[Theorem 4.25]{Lee2013} (for $\bbk=\bbC$ the same argument works), there exists an open neighbourhood $V_x\subset \Var X'$ of $x$ such that 
$\mu(V_x)$ is an embedded submanifold of $\bbk^s$ \cite[Section 5]{Lee2013}.
Hence, there exists a local inverse map $\mu_x^{-1}: \mu(V_x)\to V_x$. Let $y=\mu(x)$. The condition number at $x$ is given by \cref{eqn_kappa}:
\[
\kappa(x) = \ \lim_{\epsilon\to 0}\ \sup_{\substack{y' \in \mu(V_x), \Vert y - y'\Vert \le \epsilon}}\ \frac{\Vert \mu_x^{-1}(y) - \mu_x^{-1}(y') \Vert}{\Vert y - y' \Vert},
\]
where $\Vert\cdot \Vert$ is the Euclidean norm on both $\bbk^n$ and $\bbk^s$. Since $\mu_x^{-1}$ is smooth (holomorphic for $\bbk=\bbC$) and $\mu(V_x)$ is an embedded manifold, we can use \cite[Theorems 3 and 4]{Rice1966} to obtain
\[
 \kappa(x) = \ \max_{u\in T_y \mu(V_x), \|u\|=1} \Vert (\deriv{\mu_x^{-1}}{y}) \, u\Vert = \sigma_1( \deriv{\mu_x^{-1}}{y} ),
\]
where $\sigma_1$ denotes the largest singular value.
Because of the inverse function theorem \cite[Theorem~4.5]{Lee2013} (or its complex analogue), $\deriv{\mu_x^{-1}}{y} = (\deriv{\mu}{x})^{-1}$. Hence, we have $\kappa(x) = \frac{1}{\sigma_d( \deriv{\mu}{x} )}$ because $\dim V_x = \dim \mu(V_x) = d$ by the answer to \cref{Q8}.
The reformulation in terms of matrices is immediate.

The second case is when $x\in \mathcal X^{\mathrm{sm}} \setminus \mathcal X'$ so that $(T_x\Var{X})\cap \ker(\deriv{\mu}{x}) \neq 0$. Then, $\deriv{\mu}{x}$ has a nontrivial kernel, which implies that there is a smooth (holomorphic for $\bbk=\bbC$) curve $\gamma(t) \subset \calX^{\mathrm{sm}}$ with $\gamma(0)=x$ and $\tfrac{\mathrm{d}}{\mathrm{d} t} \mu(\gamma(t)) |_{t=0} = 0$.
We can bound the condition number as $\kappa(x) \geq \ \lim_{t\to 0}\ \frac{\Vert x - \gamma(t) \Vert}{\Vert y - \mu(\gamma(t))\Vert} = \infty$.
On the other hand,
$(T_x\Var{X})\cap\ker \deriv{\mu}{x}\neq 0$ implies $\sigma_{d}(\deriv{\mu}{x}) = 0$. Therefore, $\kappa(x) = \tfrac{1}{\sigma_{d}(\deriv{\mu}{x})}$ also in this case. \qed


\section{A numerical experiment}\label{sec_numexp}
We confirm \cref{demo-thm} experimentally in a few examples.\footnote{The Julia code we used is attached as an ancillary file and available at \cite{tensor-recovery2021}. The experiments were performed in Julia v1.6.2 on a computer running Ubuntu 20.04.2 LTS consisting of an Intel Core i7-4770K CPU (4 cores, 3.5GHz clockspeed) with $32$GB of main memory.} We showcase numerical homotopy continuation methods from HomotopyContinuation.jl \cite{BT2018} to accurately solve small algebraic compressed sensing problem instances. Recall that homotopy methods are numerical techniques for computing all isolated solutions of a system of polynomial equations. Additional experiments confirming \cref{demo-thm} with alternative techniques were independently conducted by Kr\"amer \cite{Kraemer2021}.

The structure we consider is that of tensors with a low-rank tensor rank decomposition \cite{Hitchcock1}. Specifically, let $\Var{T}_2$ be the set of tensors in $\bbR^{4}\otimes\bbR^{3}\otimes \bbR^{2} \simeq \bbR^{4 \times 3 \times 2}$ of rank bounded by $2$. This set $\Var{T}_2$ can be parameterized explicitly as in (ES) with
\begin{align*}
\phi'(a_1',a_2',b_1',b_2',c_1',c_2')
&= a_1'\otimes b_1'\otimes c_1' + a_2'\otimes b_2'\otimes c_2'\\
&= [ (a_1')_i(b_1')_j(c_1')_k ]_{i,j,k=1}^{4,3,2} + [ (a_2')_i(b_2')_j(c_2')_k]_{i,j,k=1}^{4,3,2},
\end{align*}
where $a_1',a_2'\in\bbR^4$, $b_1',b_2'\in\bbR^{3}$, and $c_1',c_2'\in\bbR^{2}$.

We want to compute isolated solutions of the polynomial system in \cref{demo-thm} with numerical homotopy continuation methods to find alternative decompositions into rank-$1$ terms, depending on the measurement map $\mu$. This requires a parameterization $\phi$ from a Euclidean space of dimension $d = \dim \Var{T}_2$ into $\Var{T}_2$.
However, it is known that this is not always possible. 
In general $\Var{T}_2$ needs to be covered with multiple charts $\bbR^{d} \to \Var{T}_2$.

The dimension of $\Var{T}_2$ is known to be $d=2(4+3+2-2)=14$ \cite{AOP2009} but the domain of $\phi'$ has dimension $2(4+3+2)=18$, so clearly $\phi'$ overparameterizes~$\Var{T}_2$.
We restrict ourselves to parameterizing the Zariski open subset $\Var{X}$ of $\Var{T}_2$ given by the image of
\[
 \phi(a_1,a_2,b_1,b_2,c_1,c_2) = \begin{bmatrix}a_1\\ 1\end{bmatrix}\otimes \begin{bmatrix}b_1\\1\end{bmatrix}\otimes c_1 + \begin{bmatrix}a_2\\1\end{bmatrix}\otimes \begin{bmatrix}b_2\\1\end{bmatrix}\otimes c_2,
\]
where now $a_1,a_2\in\bbR^{3}$, $b_1,b_2\in\bbR^2$, and $c_1,c_2 \in \bbR^2$. This $\Var{X}$ is the Zariski open subset of tensors of rank bounded by $2$ that have at least one decomposition where all vectors in the first and second factors $\bbR^4$ and $\bbR^3$ are nonzero in the last element.

Given a measurement map $\hat{\mu} : \bbR^{4\times3\times2} \to \bbR^s$ and a measurement $y = \mu(T)$ of a rank-$2$ tensor $T \in \Var{X}$, we can construct the polynomial system from \cref{demo-thm},
\begin{align}\label{eqn_poly_sys}\tag{PS}
y - \mu(\phi(a_1,a_2,b_1,b_2,c_1,c_2)) = 0,
\end{align}
and solve for the variables $a_1,a_2,b_1,b_2,c_1,c_2$.

We investigate the interesting case of random coordinate projections, as the behavior for generic linear projections is completely described in the Q\&A section. The minimum number of measurements for generic recoverability is $s=d$ by \cref{demo-thm}. So let us start with this many measurements.
We consider the following incomplete $4\times 3\times 2$ tensor of rank 2 (the fact that it has rank at most $2$ will be implied by \cref{demo-thm} after we have solved the system \cref{eqn_poly_sys} and found a solution):
\[
 y = \left[\begin{array}{ccc|ccc}
  -32   & ?     & -24   & ?   & 10    & ? \\
  72    & ?     & ?     & -57   & 27    & ? \\
  -104  & 40    & ?     & -11   & 1     & ? \\
  ?     & 16    & 0     & -1    & ?     & -7
 \end{array}\right],
\]
where the left submatrix is the first $4\times3$ slice of $y \in \bbR^{4\times3\times2}$ and the right submatrix is the second slice. Solving the system \cref{eqn_poly_sys}, we find $4$ isolated, nonsingular, real solutions $a_1^i, a_2^i, b_1^i, b_2^i, c_1^i, c_2^i$, $i=1,\ldots,4$. One can show that the parameterization $\phi$ is generically $2$-to-$1$: swapping the variables $a_1 \leftrightarrow a_2$, $b_1 \leftrightarrow b_2$, and $c_1 \leftrightarrow c_2$ corresponds to summing the rank-$1$ terms in a different order, which produces the same result. Consequently, only $2$ of these $4$ solutions represent distinct points of $\Var{X} \subset \Var{T}_2$. These distinct completed tensors, with imputed entries highlighted and rounded to $3$ decimals, are
\begin{align*}
 T_1 &= \left[\begin{array}{ccc|ccc}
  -32   & \mathbf{8}     & -24   & \mathbf{-26}   & 10    & \mathbf{-2} \\
  72    & \mathbf{-40}   & \mathbf{-5}    & -57   & 27    & \mathbf{21} \\
  -104  & 40    & \mathbf{-8}    & -11   & 1     & \mathbf{-17} \\
  \mathbf{-40}   & 16    & 0     & -1    & \mathbf{-1}    & -7
 \end{array}\right], \text{ and }\\
 T_2 &=
 \left[\begin{array}{ccc|ccc}
  -32       & \mathbf{411.518}   & -24           & \mathbf{-26}   & 10        & \mathbf{-182.054} \\
  72        & \mathbf{1112.908}  & \mathbf{1075.769}      & -57   & 27        & \mathbf{-396.574} \\
  -104      & 40        & \mathbf{-728.216}      & -11   & 1         & \mathbf{-78.642} \\
  \mathbf{-1.114}    & 16        & 0             & -1    & \mathbf{0.389}     & -7
 \end{array}\right],
\end{align*}
and their (unique) decompositions into rank-$1$ terms, rounded to $3$ decimals, are
\begin{align*}
 T_1 &= \begin{bmatrix} -1\\-6\\2\\1 \end{bmatrix} \otimes \begin{bmatrix} -2\\1\\1 \end{bmatrix} \otimes \begin{bmatrix}8\\-4\end{bmatrix} + \begin{bmatrix}2\\1\\3\\1\end{bmatrix}\otimes\begin{bmatrix}3\\-1\\1\end{bmatrix}\otimes\begin{bmatrix}-8\\-3\end{bmatrix}, \text{ and }\\
 T_2 &= \begin{bmatrix} 25.925\\60.344\\8.736\\1 \end{bmatrix} \otimes \begin{bmatrix} 0.139\\-0.001\\1 \end{bmatrix} \otimes \begin{bmatrix}-116.229\\-9.791\end{bmatrix} +
 \begin{bmatrix}25.719\\69.6\\2.471\\1\end{bmatrix}\otimes\begin{bmatrix}0.129\\0.137\\1\end{bmatrix}\otimes\begin{bmatrix}116.229\\2.791\end{bmatrix}.
\end{align*}

We verified that the differential of $\mu \circ \phi$ at all $4$ solutions for the variables $a_1,a_2,b_1,b_2,c_1,c_2$ was of (numerical) rank equal to $d=14=\dim \Var{T}_2$. Hence, these solutions are locally continuous by the inverse function theorem. We computed the (absolute) condition number of the inverse problem \cref{eqn_ip} at the solution $T_1$, respectively $T_2$, and found approximately $11.642$, respectively $1907.954$.

It is interesting to observe that both $T_1$ and $T_2$ coincide, up to $14$ significant digits, in the unobserved coordinate $(1,1,2)$ with value $-26$. This implies that augmenting $\mu$ with a projection to this coordinate (so it projects to $s = d+1$ coordinates) will not eliminate either completed tensor, even though this number of measurements generically suffices for generic identifiability by \cref{demo-thm}!
On the other hand, additionally projecting to one of the other coordinates eliminates either $T_1$ or $T_2$, so a unique solution remains with $s = d + 1$ even for such a coordinate projection (which are not generic linear maps in the sense of \cref{demo-thm}). 

\section{Conclusions}\label{sec_conclusions}
\changed{We investigated the class of algebraic compressed sensing problems where linear or polynomial measurements $\mu : \Var{X} \to \bbk^s$ are made of a model $\Var{X}$, defined implicitly or explicitly by polynomials. Our focus was on theoretical questions of existence, recoverability, identifiability, and continuity. We showed how standard results from classic algebraic geometry and differential geometry can be applied to give almost complete answers to these questions in the case of \emph{generic} algebraic compressed sensing problem instances. The numerical experiments illustrated that all reconstructions in small-scale problem instances without noise can be found using numerical homotopy continuation methods.}

\changed{We hope that the present work can be a starting point for further study of algebraic compressed sensing problems. There are several exciting challenges and directions one may follow. Indeed, we already mentioned (i) proving that a map $\mu: \Var{X}\rightarrow \bbk^s$ given by $s$ polynomials $f_1,\dots,f_s$, where each $f_i$ is generic of degree $a_i$, is everywhere recoverable if and only if $s\geq d$; (ii) deciding if every smooth, complex projective curve may be injected by a morphism of algebraic varieties to a projective plane; and (iii) providing conditions when there exists a generically identifiable coordinate projection into a $(d+1)$-dimensional space.}

\changed{An interesting observation is that one can regard the set of coordinate projections as a $0$-dimensional reducible variety in the space of linear forms. This naturally leads to the following question: what are the algebraic varieties $\Var{Y}$ in the space of linear forms with the property that for every $\Var{X}$, $\dim \Var{X} +1$ many (generic) measurements from $\Var{Y}$ are sufficient to obtain identifiability? We saw that the entire space of linear forms satisfies this condition and that the set of coordinate projections does not. We conjecture that a sufficient condition is that $\Var{Y}$ is irreducible, not linearly degenerate, and of positive dimension.}

\changed{Alternative characterizations or sufficient conditions that certify existence, recoverability, and identifiability of \textit{specific} algebraic compressed sensing problems is another area that merits further study. This likely requires a toolset that extends beyond the one we used here. The case of coordinate projections of low-rank matrices, for example, has been investigated using rigidity theory \cite{SC2010,CMNT2023}, algebraic combinatorics \cite{KTT2015}, and matroid theory \cite{Tsakiris2021,Tsakiris2021b}.}

\changed{Lastly, we focused on theoretical questions of algebraic compressed sensing problems, showing that many generic problem instances are well posed even with very few measurements. We did not investigate how these problems should be solved algorithmically in practice, where one must take into account measurement errors and the propagation of numerical roundoff errors. It would be interesting to investigate whether numerical homotopy continuation, or other numerical methods for solving systems of polynomial equations, can stably solve algebraic compressed sensing problems, not just for generic instances, but for all instances.}

\section*{Acknowledgements}

PB and NV thank Sebastian Kr\"amer for our discussions on the topic of recoverability and identifiability, and the numerical experiments he performed that further support the theory laid out in this paper. NV thanks Manolis Tsakiris for inspiring discussions on deterministic patterns for finite or unique completability of low-rank matrix and tensor models. We also thank him for detailed feedback on an earlier version of this manuscript and suggesting a further simplification of the answer to \cref{question: specific map is recoverable}. Jose Israel Rodriguez is thanked for sharing an additional reference on compressed sensing of moment varieties.

We thank the reviewers for their kind comments and suggestions for improvement, and we thank the editor for handling the process.
 
\bibliographystyle{elsarticle-num}
\bibliography{recovery} 
\end{document}